\theoremstyle{thmstyleone}%
\newtheorem{theorem}{Theorem}
\theoremstyle{thmstyletwo}%
\newtheorem{remark}{Remark}%
\theoremstyle{thmstylethree}%
\newtheorem{definition}{Definition}%
\definecolor{green}{rgb}{0,0.9,0}
\definecolor{blue}{rgb}{0,0,0.9}
\def\mc{\multicolumn}
\def\trace{\mbox{Tr}}
\def\rank{\mbox{rank}}
\newcommand{\inprod}[2]{\langle #1 , #2 \rangle}
\def\norm#1{\|#1\|}
\def\inprod#1#2{\langle#1, \, #2\rangle}
\def\pobj{{\mbox{\tt pobj}}}
\def\dobj{{\mbox{\tt dobj}}}
\theoremstyle{definition}
\theoremstyle{remark}
\newtheorem{assumption}{Assumption}[section]
\numberwithin{equation}{section}
\begin{document}

\title[Convex composite optimization problems]{An augmented Lagrangian method for strongly regular minimizers in a class of convex composite optimization problems}


\author[1]{\fnm{Chengjing} \sur{Wang}}\email{renascencewang@hotmail.com}

\author*[2]{\fnm{Peipei} \sur{Tang}}\email{tangpp@hzcu.edu.cn}
\equalcont{These authors contributed equally to this work.}

\affil*[1]{\orgdiv{School of Mathematics}, \orgname{Southwest Jiaotong University}, \orgaddress{\street{Xian Road}, \city{Chengdu}, \postcode{611756}, \state{Sichuan}, \country{China}}}

\affil[2]{\orgdiv{School of Computer and Computing Science}, \orgname{Hangzhou City University}, \orgaddress{\street{Huzhou Street}, \city{Hangzhou}, \postcode{310015}, \state{Zhejiang}, \country{China}}}


\abstract{In this paper, we study a class of convex composite optimization problems. We begin by characterizing the equivalence between the primal/dual strong second-order sufficient condition and the dual/primal nondegeneracy condition. Building on this foundation, we derive a specific set of equivalent conditions for the perturbation analysis of the problem. Furthermore, we employ the augmented Lagrangian method (ALM) to solve the problem and provide theoretical guarantees for its performance. Specifically, we establish the equivalence between the primal/dual second-order sufficient condition and the dual/primal strict Robinson constraint qualification, as well as the equivalence between the dual nondegeneracy condition and the nonsingularity of Clarke's generalized Jacobian for the ALM subproblem. These theoretical results form a solid foundation for designing efficient algorithms. Finally, we apply the ALM to the von Neumann entropy optimization problem and present numerical experiments to demonstrate the algorithm's effectiveness.}

\keywords{convex composite optimization problem, perturbation analysis, augmented Lagrangian method, von Neumann entropy optimization}


\pacs[MSC Classification]{49J52, 49J53, 90C31, 90C22}

\maketitle

\section{Introduction}\label{sec1}

In this paper, we consider the following convex composite optimization problem
\begin{align*}
(P')\qquad\min_{x\in\mathcal{R}^{n}}f(\mathcal{A}x-b)+h(\mathcal{B}x-c)
\end{align*}
where $f:\mathcal{R}^{m_{1}}\rightarrow\overline{\mathcal{R}}$ is a proper lower semicontinuous convex function which may be twice continuously differentiable, $h:\mathcal{R}^{m_{2}}\rightarrow\overline{\mathcal{R}}$ is a proper lower semicontinuous convex function,  $\mathcal{A}:\mathcal{R}^{n}\rightarrow\mathcal{R}^{m_{1}}$, $\mathcal{B}:\mathcal{R}^{n}\rightarrow\mathcal{R}^{m_{2}}$ are given linear mappings whose adjoints are denoted as $\mathcal{A}^{*}$ and $\mathcal{B}^{*}$, respectively, $b\in\mathcal{R}^{m_{1}}$ and $c\in\mathcal{R}^{m_{2}}$ are given vectors and $m=m_{1}+m_{2}$. We observe that problem $(P')$ has wide applications in various fields, such as statistics, machine learning, signal processing, image processing, and so on. One may refer to \cite{Tibshirani}, \cite{YuanL}, \cite{FriedmanHT}, \cite{BelloniCW}, \cite{GainesZ}, \cite{WangT},  \cite{LinSTW}, \cite{NakagakiFKY}, \cite{WangTHL}, and so on.

The corresponding dual problem related to problem $(P')$ is given by
\begin{eqnarray*}
	(D')\quad\quad\quad\min_{{u\in\mathcal{R}^{m_{1}}}\atop{v\in\mathcal{R}^{m_{2}}}}\Big\{f^{*}(u)+h^{*}(v)+\langle b,u\rangle +\langle c,v\rangle\, \Big|\, \mathcal{A}^{*}u+\mathcal{B}^{*}v=0\Big\}.
\end{eqnarray*}
For any $(x,u,v)\in\mathcal{R}^{n}\times\mathcal{R}^{m_{1}}\times\mathcal{R}^{m_{2}}$, the Lagrangian function corresponding to problem $(P')$ can be written as
\begin{align*}
L(x,u,v)&=\inf_{{s\in\mathcal{R}^{m_{1}}}\atop{t\in\mathcal{R}^{m_{2}}}}\Big\{f(\mathcal{A}x-b-s)+h(\mathcal{B}x-c-t)+\langle u,s\rangle+\langle v,t\rangle\Big\}\\
&=-f^{*}(u)-h^{*}(v)-\langle b,u\rangle -
\langle c,v\rangle+\langle x,\mathcal{A}^{*}u+\mathcal{B}^{*}v\rangle.
\end{align*}
The Karush-Kuhn-Tucker (KKT) condition for the composite problem $(P')$ takes the following form
\begin{eqnarray}\label{composite-problem-kkt}
\mathcal{A}^{*}u+\mathcal{B}^{*}v=0,\quad\mathcal{A}x-b\in\partial f^{*}(u),\quad \mathcal{B}x-c\in\partial h^{*}(v).
\end{eqnarray}
which can be written equivalently as
\begin{align}\label{composite-problem-kkt-generalized-equation}
\widehat{R}(x,u,v):=\left[\begin{array}{c}
\mathcal{A}^{*}u+\mathcal{B}^{*}v\\
u-\operatorname{Prox}_{f^{*}}(\mathcal{A}(x)-b+u)\\
v-\operatorname{Prox}_{h^{*}}(\mathcal{B}(x)-c+v)
\end{array}\right]=0.
\end{align}

Since Problem $(P')$ is very important, it is very meaningful to gain a deep understanding of the perturbation analysis of the problem and the algorithm for solving it.

As for perturbation analysis in optimization, many pioneers have done a lot of foundation work. In the landmark work \cite{Robinson1980}, Robinson proposed the concept of strong regularity, which is one of the most important concepts to describe the stability of the solution mapping of the following generalized equation
\begin{align}\label{eq:generalized-equation}
0\in\phi(x)+\mathcal{M}(x),
\end{align}
where  $\phi:\mathcal{R}^{t}\rightarrow\mathcal{R}^{t}$ and $\mathcal{M}:\mathcal{R}^{t}\rightrightarrows\mathcal{R}^{t}$ is a set-valued mapping. How to characterize the strong regularity of the canonically perturbed KKT system at the solution point of a given optimization problem has always been an important issue. For nonlinear programming problems, Robinson \cite{Robinson1980} proved that the strong second-order sufficient condition (SSOSC) and the linear independence constraint qualification condition (LICQ) at the solution point is a sufficient condition for the strong regularity of the canonically perturbed KKT system. Proposition 5.38 in \cite{Bonnans2000} shows that the SSOSC together with the LICQ is equivalent to the strong regularity for nonlinear programming. The characterizations of the strong regularity in terms of the strong second order sufficient condition together with the nondegeneracy condition (generalization of the LICQ condition) for second-order cone programming problems and nonlinear semidefinite programming problems are given in \cite{Bonnans2005} and \cite{Sun2006}, respectively. In addition, as the reference \cite{Clarke1976} says, the property of Clarke's generalized Jacobian of the KKT system is also an important tool to characterize the stability of the solution. In \cite{Sun2006}, the nonsingularity of Clarke's generalized Jacobian of the KKT system at the solution is also proven to be equivalent to the strong regularity. For the following general composite optimization problem
\begin{eqnarray*}
	(P)\quad\quad\quad\min_{x\in\mathcal{R}^{n}}g(F(x)),
\end{eqnarray*}
where $g:\mathcal{R}^{m}\rightarrow\overline{\mathcal{R}}$ is a lower semicontinuous proper convex function and $F:\mathcal{R}^{n}\rightarrow\mathcal{R}^{m}$ is a smooth mapping, recently under certain mild assumptions, Tang and Wang \cite{TangW2024} established the equivalence of the strong regularity of the KKT system,  the SSOSC together with the nondegeneracy condition, the nonsingularity of Clarke's generalized Jacobian of the nonsmooth system at the KKT point.

Note that problem $(P')$ can be regarded as a special case of problem $(P)$ with $F(x):=(\mathcal{A}x-b,\mathcal{B}x-c)$ and $g(a,b):=f(a)+h(b)$. Hence it is very interesting to study whether there is a specific equivalence characterization of the perturbation analysis of problem $(P')$. Let $(\bar{x},\bar{\mu}) $ be a solution of the KKT system \eqref{composite-problem-kkt-generalized-equation} with $\bar{\mu}:=(\bar{u},\bar{v})$, $\bar{u}\in\mathcal{R}^{m_{1}}$ and $\bar{v}\in\mathcal{R}^{m_{2}}$.

Although there are numerous algorithms for solving problem $(P')$,  the augmented Lagrangian method (ALM) which dates back to \cite{Hestenes1969,Powell1969} and is further studied in \cite{Rockafellar1976-1,Rockafellar1976-2,Luque1984} is an efficient approach, especially for large-scale problems. An appealing feature lies in the asymptotic superlinear convergence rate of the dual sequence generated by the ALM under the metric subregularity at the dual solution \cite{CuiSun2019}. Meanwhile, the metric subregularity is equivalent to the calmness property (see 3H of \cite{DontchevandRockafellar2009}). It is also known from \cite{Francisco2008} that the metric subregularity of the dual solution mapping at a dual optimal solution for the origin is equivalent to the quadratic growth condition at the corresponding dual optimal solution. We need the quadratic growth condition to guarantee the superlinear convergence rate. It is usually difficult to check the quadratic growth condition directly and one may deal with it alternatively by looking for other sufficient conditions. One of such conditions is the second order sufficient condition (SOSC). As shown in Theorem 3.109 of \cite{Bonnans2000}, under the conditions that $g$ is a lower semicontinuous proper convex function and the set $\operatorname{epi}g$ outer second order regular, the SOSC implies the quadratic growth condition. Other generic properties including the strict Robinson constraint qualification (SRCQ), the constraint nondegeneracy condition are also important for the corresponding problem. A natural question is whether the other conditions have any relationships with the SOSC. Furthermore, during the iterations of the ALM, we usually need to find an approximate solution of the corresponding inner subproblem efficiently with a given accuracy. If the Hessian of the inner subproblem is invertible we may apply some existing second order method such as the semismooth Newton (SSN) method to find the desired solution efficiently with a high accuracy. How to guarantee and characterize the nonsingularity of the Hessian is another important problem.

In this paper, we make an assumption that some functions are $C^{2}$-cone reducible. It needs to emphasize that the set of the $C^{2}$-cone reducible functions is rich and includes all the indicator functions of $C^{2}$-cone reducible sets. Under the assumption of $C^{2}$-cone reducibility and other implementable assumptions, we will prove the equivalence between the primal/dual SSOSC and the dual/primal nondegeneracy condition, based on which we will establish a specific set of equivalent conditions to characterize the perturbation analysis of problem $(P')$. As for the ALM applying to solve problem $(P')$, we will establish the equivalence between the primal/dual SOSC and the dual/primal SRCQ. Moreover, we will prove the equivalence between the dual nondegeneracy condition and the nonsingularity of Clarke's generalized Jacobian of the subproblem of the ALM. All the results are definitely important in both the computational and theoretical study of general composite optimization problems.

The remaining parts of this paper are organized as follows. In Section \ref{sec:preliminaries}, we summarize some preliminaries from variational analysis. In Section \ref{sec:Perturbation analysis}, we establish a specific set of equivalent conditions for problem ($P'$). In Section \ref{sec:SSN-ALM}, we introduce the ALM for problem ($P'$) and conduct a detailed theoretical analysis. In Section \ref{sec:entropy}, we focus on applying the ALM to the von Neumann entropy optimization problem. In Section \ref{sec:Numerical experiments}, we present the numerical experiments. In Section \ref{sec:Conclusion}, we give our conclusion.

\subsection{Additional notations}
\label{subsec:notations}
In our paper, the closed unit ball in $\mathcal{R}^{m}$ is denoted by $\mathbb{B}_{\mathcal{R}^{m}}$, while for a given point $x\in\mathcal{R}^{m}$ and $\varepsilon>0$, $\mathbb{B}(x,\varepsilon):=\{u\in\mathcal{R}^{m}\, |\, \|u-x\|\leq\varepsilon\}$. Given a set $C\subseteq\mathcal{R}^{m}$,  the indicator function $\delta_{C}$ of the set $C$ is defined by $\delta_{C}(x)=0$ if $x\in C$, otherwise $\delta_{C}(x)=+\infty$. Given a point $x\in\mathcal{R}^{m}$, $\operatorname{dist}(x,C)$ denotes the distance from $x$ to the set $C$, $\operatorname{\Pi}_{C}(x)$ denotes the projection of $x$ onto $C$. Denote $\mathbb{N}:=\{1,2,\ldots\}$ and $\mathbb{N}_{\infty}:=\{N\subseteq\mathbb{N}\, |\, \mathbb{N}\setminus N\ \mbox{is finite}\}$. For a sequence $\{x^{k}\}$,  $x^{k}{\stackrel{C}{\rightarrow}}x$ means that $x^{k}\rightarrow x$ with $x^{k}\in C$ and given $N\subseteq\mathbb{N}$, $x^{k}{\stackrel{N}{\rightarrow}}x$ stands for $x^{k}\rightarrow x$ with $k\in N$. The smallest cone containing $C$, which is called the positive hull of $C$, has the formula $\operatorname{pos}C=\{0\}\cup\{\lambda x\, |\, x\in C,\, \lambda>0\}$. If $C=\emptyset$, one has $\operatorname{pos}C=\{0\}$ and if $C\neq\emptyset$, one has $\operatorname{pos}C=\{\lambda x\, |\, x\in C,\, \lambda\geq0\}$. The intersection of all the convex sets containing $C$ is called the convex hull of $C$ and is denoted by $\operatorname{conv}C$. For a nonempty closed convex cone $C$, the polar cone of $C$ is $C^{\circ}=\{y\,|\, \langle x,y\rangle\leq 0,\,\forall\, x\in C\}$,  the lineality space of $C$ is $\operatorname{lin}C:=C\cap(-C)$, which is the largest subspace contained in $C$, and the affine space is $\operatorname{aff}C:=C-C$, which is the smallest subspace containing $C$. Given a matrix $A\in\mathcal{R}^{m\times n}$, $\operatorname{NULL}(A):=\{x\in\mathcal{R}^{n}\,|\,Ax=0\}$.

\section{Preliminaries}\label{sec:preliminaries}

In this section, we introduce some basic knowledge on variational analysis that are used in the subsequent sections. More details can be refereed in the monographs \cite{Bonnans2000,Rockafellar1998,Mordukhovich2006,Mordukhovich2018}.

Let $r:\mathcal{R}^{m}\rightarrow\overline{\mathcal{R}}$ be a real valued function with its epigraph $\operatorname{epi}r$ defined as the set
\begin{eqnarray*}
	\operatorname{epi}r:=\Big\{(x,c)\, \Big|\, x\in\operatorname{dom}r,\ c\in\mathcal{R},\ r(x)\leq c\Big\}
\end{eqnarray*}
and its conjugate at $x\in\mathcal{R}^{m}$ defined by
\begin{eqnarray*}
	r^{*}(x):=\sup_{u\in\operatorname{dom}r}\Big\{\langle x,u\rangle-r(u)\Big\},
\end{eqnarray*}
where $\operatorname{dom}r:=\{x\,|\,r(x)<+\infty\}$.
The extended function $r$ is said to be proper if $r(x)>-\infty$ and $r(x)\not\equiv +\infty$ for all $x\in\mathcal{R}^{m}$. For a proper lower semicontinuous function $r$, the Moreau envelope function $e_{\sigma r}$ and the proximal mapping $\operatorname{Prox}_{\sigma r}$ corresponding to $r$ with a parameter $\sigma>0$ are defined as
\begin{eqnarray*}
	e_{\sigma r}(x)&:=&\inf_{u\in\mathcal{R}^{m}}\Big\{r(u)+\frac{1}{2\sigma}\|u-x\|^{2}\Big\},\\
	\operatorname{Prox}_{\sigma r}(x)&:=&\mathop{\operatorname{argmin}}_{u\in\mathcal{R}^{m}}\Big\{r(u)+\frac{1}{2\sigma}\|u-x\|^{2}\Big\}.
\end{eqnarray*}

For the extended real-valued function $r$ and $x\in\operatorname{dom}r$, the regular/Fr\'{e}chet subdifferential of $r$ at $x$ is the set
\begin{align*}
\widehat{\partial}r(x):=\Big\{v\in\mathcal{R}^{m}\,\Big|\, r(x')\geq r(x)+\langle v,x'-x\rangle+o(\|x'-x\|)\Big\}.
\end{align*}
The limiting/Mordukhovich subdifferential of $r$ at $x$, denoted by $\partial r(x)$, is given by
\begin{align*}
\partial r(x):=\Big\{v\in\mathcal{R}^{m}\,\Big|\, \exists\, x^{k}\rightarrow x\ \mbox{with}\ r(x^{k})\rightarrow r(x),\, v^{k}\in\widehat{\partial}r(x^{k})\ \mbox{with}\ v^{k}\rightarrow v\Big\}.
\end{align*}

Let $x\in\mathcal{R}^{m}$ be a point such that $r(x)$ is finite. The lower and upper directional epiderivatives of $r$ are defined as follows
\begin{eqnarray*}
	r^{\downarrow}_{-}(x,d)&:=&\liminf_{{t\downarrow 0}\atop{d'\rightarrow d}}\frac{r(x+td')-r(x)}{t},\\
	r^{\downarrow}_{+}(x,d)&:=&\sup_{\{t_{n}\}\in\operatorname{\Sigma}}\left(\liminf_{{n\rightarrow\infty}\atop{d'\rightarrow d}}\frac{r(x+t_{n}d')-r(x)}{t_{n}}\right),
\end{eqnarray*}
where $\operatorname{\Sigma}$ denotes the set of positive real sequences $\{t_{n}\}$ converging to zero. We say that $r$ is directionally epidifferentiable at $x$ in a direction $d$ if $r^{\downarrow}_{-}(x,d)=r^{\downarrow}_{+}(x,d)$. If $r$ is convex, then $r^{\downarrow}_{-}(x,d)=r^{\downarrow}_{+}(x,d)$. Assuming that $r(x)$ and the respective directional epiderivatives $r^{\downarrow}_{-}(x,d)$ and $r^{\downarrow}_{+}(x,d)$ are finite, we can define the lower and upper second order epiderivatives as
\begin{eqnarray*}
	r^{\downdownarrows}_{-}(x;d,w)&:=&\liminf_{{t\downarrow 0}\atop{w'\rightarrow w}}\frac{r(x+td+\frac{1}{2}t^{2}w')-r(x)-tr^{\downarrow}_{-}(x,d)}{\frac{1}{2}t^{2}},\\
	r^{\downdownarrows}_{+}(x;d,w)&:=&\sup_{\{t_{n}\}\in\operatorname{\Sigma}}\left(\liminf_{{n\rightarrow\infty}\atop{w'\rightarrow w}}\frac{r(x+t_{n}d+\frac{1}{2}t_{n}^{2}w')-r(x)-t_{n}r^{\downarrow}_{+}(x,d)}{\frac{1}{2}t_{n}^{2}}\right).
\end{eqnarray*}
The function $r$ is parabolically epidifferentiable at $x$ in a direction $d$ if $r^{\downdownarrows}_{-}(x;d,\cdot)=r^{\downdownarrows}_{+}(x;d,\cdot)$.

Given a set $C\subseteq\mathcal{R}^{m}$ and a point $x\in C$, recall that the (Painlev\'{e}-Kuratowski) outer and inner limit of the set-valued mapping $S:\mathcal{R}^{m}\rightrightarrows\mathcal{R}^{n}$ as $x'\rightarrow x$ are defined by
\begin{align*}
\limsup_{x'\rightarrow x} S(x):=\Big\{y\in\mathcal{R}^{n}\, \Big|\, \exists\, x^{k}\rightarrow x,\ y^{k}\rightarrow y\ \mbox{with}\ y^{k}\in S(x^{k})\Big\}
\end{align*}
and
\begin{align*}
\liminf_{x'\rightarrow x} S(x):=\left\{y\in\mathcal{R}^{n}\, \left|\, \forall\, x^{k}\rightarrow x,\ \exists\, N\in\mathbb{N}_{\infty},\  y^{k}\stackrel{N}{\longrightarrow} y\ \mbox{with}\ y^{k}\in S(x^{k})\right.\right\},
\end{align*}
respectively.
The (Bouligand) tangent/contingent cone to $C$ at $x$ is defined by
\begin{align*}
\mathcal{T}_{C}(x):=\limsup_{t\downarrow 0}\frac{C-x}{t}=\Big\{y\in\mathcal{R}^{m}\, \Big|\, \exists\ t_{k}\downarrow 0,\ y^{k}\rightarrow y\ \mbox{with}\ x+t_{k}y^{k}\in C\Big\}.
\end{align*}
The corresponding regular tangent cone to $C$ at $x$ is defined by
\begin{align*}
\widehat{\mathcal{T}}_{C}(x):=\liminf_{{t\downarrow 0, x'{\stackrel{C}{\rightarrow}}x}}\frac{C-x'}{t}.
\end{align*}
The regular/Fr\'{e}chet normal cone to $C$ at $x$ can be defined equivalently by
\begin{align*}
\widehat{\mathcal{N}}_{C}(x):=\left\{y\in\mathcal{R}^{m}\, \left|\, \limsup_{x'{\stackrel{C}{\rightarrow}} x}\frac{\langle y,x'-x\rangle}{\|x'-x\|}\leq 0\right.\right\}=\mathcal{T}_{C}^{\circ}(x).
\end{align*}
The limiting/Mordukhovich normal cone to $C$ at $x$ admits the following form
\begin{align*}
\mathcal{N}_{C}(x):=\limsup_{x'{\stackrel{C}{\rightarrow}} x}\widehat{\mathcal{N}}_{C}(x'),
\end{align*}
which is equivalent to the original definition by Mordukhovich \cite{MORDUKHOVICH1976}, i.e.,
\begin{align*}
\mathcal{N}_{C}(x):=\limsup_{x'\rightarrow x}\left\{\operatorname{pos}(x'-\operatorname{\Pi}_{C}(x'))\right\},\quad \mathcal{N}_{C}(x)^{\circ}=\widehat{\mathcal{T}}_{C}(x),
\end{align*}
if $C$ is locally closed at $x$.

When the set $C$ is convex, both tangent and regular tangent cones reduce to the classical tangent cone while both regular and limiting normal cones reduce to the classical normal cone of convex analysis with $\mathcal{T}_{C}(x)$ and $\mathcal{N}_{C}(x)$ as the commonly used notations, respectively.

\begin{definition}
	Let $\mathcal{X}$ be a finite dimensional Euclidean space. A set $C\subseteq\mathcal{R}^{m}$ is said to be outer second order regular at a point $x\in C$ in a direction $d\in\mathcal{T}_{C}(x)$ and with respect to a linear mapping $\mathcal{W}:\mathcal{X}\rightarrow\mathcal{R}^{m}$ if for any sequence $x^{n}\in C$ of the form $x^{n}:=x+t_{n}d+\frac{1}{2}t_{n}^{2}r^{n}$, where $t_{n}\downarrow 0$ and $r^{n}=\mathcal{W}w^{n}+a^{n}$ with $\{a^{n}\}$ being a convergent sequence in $\mathcal{R}^{m}$ and $w^{n}$ being a sequence in $\mathcal{X}$ satisfying $t_{n}w^{n}\rightarrow 0$, the following condition holds:
	\begin{align}\label{def-outer-second-order-regular}
	\lim_{n\rightarrow\infty}\operatorname{dist}\left(r^{n},\mathcal{T}^{2}_{C}(x,d)\right)=0,
	\end{align}
	where $\mathcal{T}^{2}_{C}(x,d)$ is the outer second order tangent set to the set $C$ at the point $x$ and in the direction $d$ and it is defined by
	\begin{align*}
	\mathcal{T}_{C}^{2}(x,d):=\limsup_{t\downarrow 0}\frac{C-x-td}{\frac{1}{2}t^{2}}.
	\end{align*}
	If $C$ is outer second order regular at $x\in C$ in every direction $d\in\mathcal{T}_{C}(x)$ and with respect to any $\mathcal{X}$ and $\mathcal{W}$, i.e., \eqref{def-outer-second-order-regular} holds for any sequence $x+t_{n}d+\frac{1}{2}t_{n}^{2}r^{n}\in C$ such that $t_{n}r^{n}\rightarrow 0$ and $d\in\mathcal{T}_{C}(x)$, we say that $C$ is outer second order regular at $x$.
\end{definition}

\begin{definition}
	Let $r:\mathcal{R}^{m}\rightarrow\overline{\mathcal{R}}$ be an extended real valued function taking a finite value at a point $x$. We say that $r$ is outer second order regular at the point $x$ and in the direction $d$, if $r^{\downarrow}_{-}(x,d)$ is finite and the set $\operatorname{epi}r$ is outer second order regular at the point $(x,r(x))$ in the direction $(d,r^{\downarrow}_{-}(x,d))$. We say that $r$ is outer second order regular at the point $x$ if the set $\operatorname{epi}r$ is outer second order regular at the point $(x,r(x))$.
\end{definition}


\begin{definition}
	Let $C\subseteq\mathcal{R}^{m}$ and $K\subseteq\mathcal{R}^{t}$ be convex closed sets. We say that the set $C$ is $C^{\ell}$-reducible to the set $K$, at a point $x\in C$, if there exist a neighborhood $N$ at $x$ and an $\ell$-times continuously differentiable mapping $\Xi:N\rightarrow\mathcal{R}^{t}$ such that
	\begin{enumerate}
		\item $\Xi'(x):\mathcal{R}^{m}\rightarrow\mathcal{R}^{t}$ is onto,
		\item $C\cap N=\{x\in N\, |\, \Xi(x)\in K\}$.
	\end{enumerate}
	We say that the reduction is pointed if the tangent cone $\mathcal{T}_{K}(\Xi(x))$ is a pointed cone. If, in addition, the set $K-\Xi(x)$ is a pointed closed convex cone, we say that $C$ is $C^{\ell}$-cone reducible at $x$. We can assume without loss of generality that $\Xi(x)=0$.
\end{definition}	

We say a closed proper convex function $r:\mathcal{R}^{m}\rightarrow\overline{\mathcal{R}}$ is $C^{2}$-cone reducible at $x$, if the set $\operatorname{epi}r$ is $C^{2}$-cone reducible at $(x,r(x))$. Moreover, we say $r$ is $C^{2}$-cone reducible if $r$ is $C^{2}$-cone reducible at every point $x\in\operatorname{dom}r$. It is known from Proposition 3.136 of \cite{Bonnans2000} that $r$ is outer second order regular at $x$ if $r$ is $C^{2}$-cone reducible at $x$.
%

For a set valued mapping $S:\mathcal{R}^{m}\rightrightarrows\mathcal{R}^{n}$, the range of $S$ is taken to be the set $\operatorname{rge}S:=\{u\in\mathcal{R}^{n}\,|\,\exists\,x\in\mathcal{R}^{m}\ \mbox{with}\ u\in S(x)\}$. The inverse mapping $S^{-1}:\mathcal{R}^{n}\rightrightarrows\mathcal{R}^{m}$ is defined by $S^{-1}(u):=\{x\in\mathcal{R}^{m}\,|\,u\in S(x)\}$.
The limiting coderivative is the mapping $D^{*}S(x,u):\mathcal{R}^{n}\rightrightarrows\mathcal{R}^{m}$ defined by
\begin{align*}
v\in D^{*}S(x,u)(y)\quad\Longleftrightarrow\quad (v,-y)\in\mathcal{N}_{\operatorname{gph}S}(x,u).
\end{align*}
Here the notations $D^{*}S(x,u)$ is simplified to $D^{*}S(x)$, when $S$ is single-valued at $x$, i.e., $S(x)=\{u\}$.

\begin{definition}
	The multifunction $S:\mathcal{R}^{m}\rightrightarrows\mathcal{R}^{n}$ is said to be calm at $\bar{z}$ for $\bar{w}$ if there exists $\kappa\geq0$ along
	with $\varepsilon>0$ and $\delta>0$ such that for all $z\in\mathbb{B}(\bar{z},\varepsilon)$,
	\begin{align*}
	S(z)\cap\mathbb{B}(\bar{w},\delta)\subseteq S(\bar{z})+\kappa\|z-\bar{z}\|\mathbb{B}_{\mathcal{R}^{n}}.
	\end{align*}
\end{definition}	
\begin{definition}
	The multifunction $S:\mathcal{R}^{m}\rightrightarrows\mathcal{R}^{n}$ is said to be metrically subregular at $\bar{z}$ for $\bar{w}$ if there
	exists $\kappa\geq0$ along with $\varepsilon>0$ and $\delta>0$ such that for all  $z\in\mathbb{B}(\bar{z},\varepsilon)$,
	\begin{align*}
	\operatorname{dist}(z,S^{-1}(\bar{w}))\leq\kappa\operatorname{dist}(\bar{w},S(z)\cap\mathbb{B}(\bar{w},\delta)).
	\end{align*}
\end{definition}

It is known from Theorem 3H.3 of \cite{DontchevandRockafellar2009} that $S$ is calm at $\bar{z}$ for $\bar{w}$ if and only if that $S^{-1}$ is metrically subregular at $\bar{w}$ for $\bar{z}$.

For a vector-valued locally Lipschitz continuous function $F:\mathcal{O}\rightarrow\mathcal{R}^{n}$ with $\mathcal{O}$ an open subset of $\mathcal{R}^{m}$, we know from Rademacher's theorem (see Theorem 9.60 of \cite{Rockafellar1998}) that $F$ is F(r\'{e}chet)-differentiable almost everywhere on $\mathcal{O}$. Let $\mathcal{D}_{F}$ be the set of all points where $F$ is F-differentiable and $F'(x)\in\mathcal{R}^{n\times m}$ be the Jacobian of $F$ at $x\in \mathcal{D}_{F}$, whose adjoint is denoted as $F'(\bar{x})^{*}$. Then the B-subdifferential of $F$ at $x\in\mathcal{O}$ is defined by
\begin{align*}
\partial_{B}F(x):=\Big\{U\in\mathcal{R}^{n\times m}\,\Big|\, \exists\, x^{k}\stackrel{\mathcal{D}_{F}}{\longrightarrow} x, F'(x^{k})\rightarrow U\Big\}.
\end{align*}
The Clarke subdifferential of $F$ at $x$ is defined as the convex hull of the B-subdifferential of $F$ at $x$, that is, $\partial F(x):=\operatorname{conv}(\partial_{B}F(x))$.

\begin{definition}
	We say a function $F:\mathcal{R}^{m}\rightarrow\mathcal{R}^{n}$ is directionally differentiable at $x\in\mathcal{R}^{m}$ in a direction $d\in\mathcal{R}^{m}$, if
	\begin{align*}
	F'(x,d):=\lim_{t\downarrow 0}\frac{F(x+td)-F(x)}{t}
	\end{align*}
	exists. If $F$ is directionally differentiable at $x$ in every direction $d\in\mathcal{R}^{m}$, we say $F$ is directionally differentiable at $x$.
\end{definition}	


\begin{definition}
	A function $F:\mathcal{R}^{m}\rightarrow\mathcal{R}^{n}$ is said to be semismooth at $x\in\mathcal{R}^{m}$ if $F$ is locally Lipschitzian at $x$ and
	\begin{align*}
	\lim_{{U\in\partial F(x+th')}\atop{h'\rightarrow h,t\downarrow 0}}\{Uh'\}
	\end{align*}
	exists for all $h\in\mathcal{R}^{m}$. If for any $V\in\partial F(x+h)$ and $h\rightarrow 0$,  $Vh-F'(x,h)=O(\|h\|^{1+p})$, where $0<p\leq 1$, then we call $F$ is $p$-order semismooth at $x$. If $p=1$, we also call $F$ is strongly semismooth at $x$.
\end{definition}	

It is known from  \cite{QiandSun1993} that $F$ is directionally differentiable at $x\in\mathcal{R}^{m}$ if $F$ is semismooth at $x$.

For problem ($P$), the SOSC at $\bar{x}$ can be written as
\begin{align*}
&\sup_{\mu\in\Lambda(\bar{x})}\Big\{\langle\mu,F''(\bar{x})(d,d)\rangle+\varphi_{g}(F(\bar{x}),\mu)(F'(\bar{x})d)\Big\}> 0,\, \forall\, d\in\mathcal{C}(\bar{x})\setminus\{0\},
\end{align*}
where
\begin{align*}
\varphi_{g}(
F(\bar{x}),\mu)(F'(\bar{x})d):=\inf\limits_{w\in\mathcal{R}^{m}}\Big\{g^{\downdownarrows}_{-}(F(\bar{x});F'(\bar{x})d,w)-\langle w,\mu\rangle\Big\},
\end{align*}
\begin{align*}
\mathcal{C}(\bar{x})&=\Big\{d\, \Big|\, g^{\downarrow}_{-}(F(\bar{x}),F'(\bar{x})d)=0\Big\}=\Big\{d\, \Big|\, F'(\bar{x})d\in\mathcal{N}_{\partial g(F(\bar{x}))}(\mu)\Big\},
\end{align*}
and $\Lambda(\bar{x})$ is the set of all Lagrangian multipliers associated with $\bar{x}$.

Let $r:\mathcal{R}^{m}\rightarrow\overline{\mathcal{R}}$. Given $\bar{x}\in\operatorname{dom}r$ and $\bar{u}\in\partial r(\bar{x})$, we define the function  $\Gamma_{r}(\bar{x},\bar{u}):\mathcal{R}^{m}\rightarrow\overline{\mathcal{R}}$ as follows
\begin{align*}
\Gamma_{r}(\bar{x},\bar{u})(v):=\left\{\begin{array}{cl}\min\limits_{{d\in\mathcal{R}^{m},v=Ud}\atop{U\in\partial \operatorname{Prox}_{r}(\bar{x}+\bar{u})}}\langle v,d-v\rangle,&\mbox{if}\ v\in\bigcup\limits_{U\in\partial \operatorname{Prox}_{r}(\bar{x}+\bar{u})}\operatorname{rge}U,\\
+\infty,&\mbox{otherwise}.
\end{array}\right.
\end{align*}
Based on \cite{TangW2024}, we present the definition of the SSOSC as below.
\begin{definition}
	Let $\bar{x}$ be a stationary point of problem $(P)$. We say that the SSOSC holds at $\bar{x}$ if
	\begin{align}
	\sup_{\mu\in\Lambda(\bar{x})}\Big\{\langle\mu,F''(\bar{x})(d,d)\rangle+\Gamma_{g}(F(\bar{x}),\mu)(F'(\bar{x})d)\Big\}> 0,\,\forall\,d\neq0.\label{eq:comp-prob-SSOSC}
	\end{align}
\end{definition}

We present the following assumptions which will be required for some theoretical results.
\begin{assumption}\label{assump-1}
	Suppose that $r:\mathcal{R}^{m}\rightarrow\overline{\mathcal{R}}$ is a proper convex $C^{2}$-cone reducible function.
\end{assumption}	
\begin{assumption}\label{assumption-2}
	Let $r:\mathcal{R}^{m}\rightarrow\overline{\mathcal{R}}$ be a lower semicontinuous proper convex function, $\bar{x}\in\operatorname{dom}r$ and $\bar{u}\in\partial r(\bar{x})$. Suppose that
	\begin{align*}
	\operatorname{dom}D^{*}(\partial r)(\bar{x},\bar{u})&=\operatorname{aff}\Big\{d\, \Big|\, r^{\downarrow}_{-}(\bar{x},d)=\langle\bar{u},d\rangle\Big\},\\
	\Big\{d\, \Big|\, 0\in D^{*}\operatorname{Prox}_{r}(\bar{x}+\bar{u})(d)\Big\}&=\operatorname{aff}\Big\{d\, \Big|\, \operatorname{Prox}_{r}'(\bar{x}+\bar{u},d)=0\Big\}.
	\end{align*}
\end{assumption}	
\begin{assumption}\label{assumption-3}
	Let $r:\mathcal{R}^{m}\rightarrow\overline{\mathcal{R}}$ be a lower semicontinuous proper convex function, $\bar{x}\in\operatorname{dom}r$ and $\bar{u}\in\partial r(\bar{x})$. For any $v\in\operatorname{rge}(D^{*}\operatorname{Prox}_r(\bar{x}+\bar{u}))$, suppose that
	\begin{align*}
	\mathop{\operatorname{argmin}}_{{d\in\mathcal{R}^{m},v=Ud}\atop{U\in\partial \operatorname{Prox}_{r}(\bar{x}+\bar{u})}}\langle v,d-v\rangle\cap\Big\{d\,\Big|\, v\in D^{*}\operatorname{Prox}_{r}(\bar{x}+\bar{u})(d)\Big\}\neq\emptyset.
	\end{align*}
\end{assumption}

\begin{remark}
	We say a function $r$ satisfies Assumption \ref{assump-1}, Assumption \ref{assumption-2} or \ref{assumption-3} at $\bar{x}$ for $\bar{u}$. We also say a function $r$ satisfies Assumption \ref{assumption-2} or \ref{assumption-3} for short if the corresponding points are known from the context. In Appendix A of \cite{TangW2024}, based on Assumption \ref{assump-1} we have given sufficient conditions for Assumptions \ref{assumption-2} and \ref{assumption-3}, respectively. In fact, the above assumptions hold for many functions in conic programming, including the $\ell_{p}(p=1,2,\infty)$ norm function, the indicator functions of the nonnegative orthant cone, the second-order cone, and the positive semidefinite cone.  We have also taken the indicator function of the positive semidefinite cone, which is the most difficult case, as an example to provide a verification of Assumptions \ref{assumption-2} and \ref{assumption-3} in Appendix B of \cite{TangW2024}.
\end{remark}

\section{The equivalence of the nondegeneracy condition and the SSOSC}
\label{sec:Perturbation analysis}

In this section, we are dedicated to study the perturbation analysis of problem $(P')$.
We focus on proving the equivalence of the nondegeneracy condition of the primal/dual problem and the SSOSC of the dual/primal problem for the problems $(P')$ and $(D')$.
The nondegeneracy condition and the SSOSC are two kinds of conditions which are very important to characterize the nonsingularity of the Hessian for the inner subproblem of the ALM (see Section \ref{sec:SSN-ALM} for more details).
Let $\bar{x}$ be an optimal solution of the primal problem $(P')$, $\Lambda(\bar{x})$ be the set of all such Lagrangian multipliers $(\bar{u},\bar{v})$ associated with $\bar{x}$ and $(\bar{u},\bar{v})$ be a solution of the dual problem $(D')$, $\Lambda(\bar{u},\bar{v})$ be the set of all such Lagrangian multipliers $\bar{x}$ associated with $(\bar{u},\bar{v})$. The SSOSC of the primal/dual problem may look very complicated, we will give a very simple characterization when the set $\Lambda(\bar{x})/\Lambda(\bar{u},\bar{v})$ is a singleton.

The nondegeneracy conditions for the primal problem $(P')$ at $\bar{x}$ and the dual problem $(D')$ at $(\bar{u},\bar{v})$ are
\begin{align}
&\left(\begin{array}{cc}\mathcal{A}\\\mathcal{B}\end{array}\right)\mathcal{R}^{n}-\left(\begin{array}{cc}\Big\{d\, \Big|\, f^{\downarrow}_{-}(\mathcal{A}\bar{x}-b,\mathcal{A}d)=-f^{\downarrow}_{-}(\mathcal{A}\bar{x}-b,-\mathcal{A}d)\Big\}\\\Big\{d\, \Big|\, h^{\downarrow}_{-}(\mathcal{B}\bar{x}-c,\mathcal{B}d)=-h^{\downarrow}_{-}(\mathcal{B}\bar{x}-c,-\mathcal{B}d)\Big\}\end{array}\right)=\left(\begin{array}{cc}\mathcal{R}^{m_{1}}\\\mathcal{R}^{m_{2}}\end{array}\right)\label{eq:nondeg-cond-primal}
\end{align}
and
\begin{align}	
&\mathcal{A}^{*}\Big\{d_{1}\, \Big|\, f^{*\downarrow}_{-}(\bar{u},d_{1})=-f^{*\downarrow}_{-}(\bar{u},-d_{1})\Big\}+\mathcal{B}^{*}\Big\{d_{2}\, \Big|\, h^{*\downarrow}_{-}(\bar{v},d_{2})=-h^{*\downarrow}_{-}(\bar{v},-d_{2})\Big\}=\mathcal{R}^{n},\label{eq:nondeg-cond-dual}
\end{align}
respectively. We can write out the SSOSC for the primal problem $(P')$ as
\begin{align}\label{eq:SSOSC-primal}
&\sup_{(u,v)\in\Lambda(\bar{x})}\Big\{\Gamma_{f}(\mathcal{A}\bar{x}-b,u)(\mathcal{A}d)+\Gamma_{h}(\mathcal{B}\bar{x}-c,v)(\mathcal{B}d)\Big\}>0,\,\forall\,d\neq0.
\end{align}
The SSOSC for the dual problem $(D')$ is
\begin{align}\label{eq:SSOSC-dual}
&\sup_{x\in\Lambda(\bar{u},\bar{v})}\Big\{\Gamma_{f^{*}}(\bar{u},\mathcal{A}x-b)(d_{1})+\Gamma_{h^{*}}(\bar{v},\mathcal{B}x-c)(d_{2})\Big\}>0,\,\forall\,(d_{1},d_{2})\neq0.
\end{align}
Now we state the main result in the following theorem.
\begin{theorem}\label{thm:equiv-SSOSC-nondegeracy}
	Let $\bar{x}$ be an optimal solution of the primal problem $(P')$, $(\bar{u},\bar{v})$ be the corresponding solution of the dual problem $(D')$. One has
	\begin{enumerate}
		\item Suppose that $\Lambda(\bar{u},\bar{v})=\{\bar{x}\}$, the functions $f$ and $h$ satisfy Assumption \ref{assump-1}, Assumption \ref{assumption-2} at $\mathcal{A}\bar{x}-b$ for $\bar{u}$ and at $\mathcal{B}\bar{x}-c$ for $\bar{v}$, respectively. The nondegeneracy condition \eqref{eq:nondeg-cond-primal} of the primal problem $(P')$ holds at $\bar{x}$ if and only if the SSOSC \eqref{eq:SSOSC-dual} of the dual problem $(D')$ is valid at $(\bar{u},\bar{v})$.
		\item Suppose that $\Lambda(\bar{x})=\{(\bar{u},\bar{v})\}$, the functions $f^{*}$ and $h^{*}$ satisfy Assumption \ref{assump-1}, Assumption \ref{assumption-2} at $\bar{u}$ for $\mathcal{A}\bar{x}-b$ and at $\bar{v}$ for $\mathcal{B}\bar{x}-c$, respectively. The nondegeneracy condition \eqref{eq:nondeg-cond-dual} of the dual problem $(D')$ holds at $(\bar{u},\bar{v})$ if and only if the SSOSC \eqref{eq:SSOSC-primal} of the primal problem $(P')$ is valid at $\bar{x}$.
	\end{enumerate}
\end{theorem}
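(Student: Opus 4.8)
\emph{Overall strategy and the symmetry reduction.}
The plan is to prove part 1 directly and to deduce part 2 from it by the primal--dual symmetry between $(P')$ and $(D')$. Observe that $(D')$ is itself of the form $(P')$, namely
\[
\min_{(u,v)}\ G_{1}(u,v)+G_{2}\big(\mathcal{A}^{*}u+\mathcal{B}^{*}v\big),\qquad G_{1}(u,v):=f^{*}(u)+h^{*}(v)+\langle b,u\rangle+\langle c,v\rangle,\ \ G_{2}:=\delta_{\{0\}},
\]
with the two linear maps $(u,v)\mapsto(u,v)$ and $(u,v)\mapsto\mathcal{A}^{*}u+\mathcal{B}^{*}v$ and with zero offsets. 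Since $f^{**}=f$, $h^{**}=h$ and $\delta_{\{0\}}^{*}\equiv 0$, the dual of this problem in the sense of the $(P')$--$(D')$ correspondence is $\min_{x}f(\mathcal{A}x-b)+h(\mathcal{B}x-c)=(P')$; its multiplier set at $\bar{x}$ is $\Lambda(\bar{x})$, its nondegeneracy condition at $(\bar{u},\bar{v})$ is exactly \eqref{eq:nondeg-cond-dual} (eliminating the $\delta_{\{0\}}$-block, which contributes a factor $\{0\}$), and the SSOSC of its dual is exactly \eqref{eq:SSOSC-primal}. One checks this using that $G_{1}$ separates, that adding the linear terms $\langle b,\cdot\rangle,\langle c,\cdot\rangle$ changes neither $C^{2}$-cone reducibility nor the proximal mappings up to a translation of the argument, and that $\delta_{\{0\}}$ is trivially $C^{2}$-cone reducible and satisfies Assumption \ref{assumption-2}. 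Hence the hypotheses of part 2 are precisely those required to apply part 1 to the pair $((D'),(P'))$, and part 2 follows.

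\emph{Rewriting the dual SSOSC and the key lemma.}
For part 1, use $\Lambda(\bar{u},\bar{v})=\{\bar{x}\}$ to drop the supremum in \eqref{eq:SSOSC-dual}; spelling out \eqref{eq:SSOSC-dual} through the reduction of $(D')$ above, the $\delta_{\{0\}}$-block contributes a term equal to $0$ when $\mathcal{A}^{*}d_{1}+\mathcal{B}^{*}d_{2}=0$ and $+\infty$ otherwise, so \eqref{eq:SSOSC-dual} is equivalent to
\[
\Gamma_{f^{*}}(\bar{u},\mathcal{A}\bar{x}-b)(d_{1})+\Gamma_{h^{*}}(\bar{v},\mathcal{B}\bar{x}-c)(d_{2})>0\quad\text{for all }(d_{1},d_{2})\neq 0\text{ with }\mathcal{A}^{*}d_{1}+\mathcal{B}^{*}d_{2}=0.
\]
The heart of the matter is the identification of the set where $\Gamma_{f^{*}}$ vanishes. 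Set $z_{1}:=\mathcal{A}\bar{x}-b+\bar{u}$, so that $\operatorname{Prox}_{f^{*}}(z_{1})=\bar{u}$ by \eqref{composite-problem-kkt-generalized-equation} and $\operatorname{Prox}_{f}(z_{1})=\mathcal{A}\bar{x}-b$ by the Moreau identity. Every $U\in\partial\operatorname{Prox}_{f^{*}}(z_{1})$ is symmetric with spectrum in $[0,1]$, so $\langle Ud,(I-U)d\rangle=\langle d,(U-U^{2})d\rangle\geq 0$; hence $\Gamma_{f^{*}}(\bar{u},\mathcal{A}\bar{x}-b)\geq 0$ on its domain and $\Gamma_{f^{*}}(\bar{u},\mathcal{A}\bar{x}-b)(d_{1})=0$ iff $Ud_{1}=d_{1}$ for some $U\in\partial\operatorname{Prox}_{f^{*}}(z_{1})$, equivalently $Vd_{1}=0$ for some $V\in\partial\operatorname{Prox}_{f}(z_{1})$ (Moreau again). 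By Assumption \ref{assump-1} and the construction of $\Gamma$, this last set is a linear subspace; it contains $\{d_{1}\mid\operatorname{Prox}_{f}'(z_{1};d_{1})=0\}$ (semismoothness of $\operatorname{Prox}_{f}$) and is contained in $\{d_{1}\mid 0\in D^{*}\operatorname{Prox}_{f}(z_{1})(d_{1})\}$, which by Assumption \ref{assumption-2} (its second identity, for $f$ at $\mathcal{A}\bar{x}-b$ for $\bar{u}$) equals $\operatorname{aff}\{d_{1}\mid\operatorname{Prox}_{f}'(z_{1};d_{1})=0\}$, so all three coincide. Finally the $C^{2}$-cone reduction gives $\{d_{1}\mid\operatorname{Prox}_{f}'(z_{1};d_{1})=0\}=\mathcal{C}_{f}^{\circ}$, the polar of the critical cone $\mathcal{C}_{f}$ of $f$ at $\mathcal{A}\bar{x}-b$ for $\bar{u}$, and $\operatorname{aff}(\mathcal{C}_{f}^{\circ})=(\operatorname{lin}\mathcal{C}_{f})^{\perp}$ with $\operatorname{lin}\mathcal{C}_{f}=\{w_{1}\mid f^{\downarrow}_{-}(\mathcal{A}\bar{x}-b,w_{1})=-f^{\downarrow}_{-}(\mathcal{A}\bar{x}-b,-w_{1})\}=:L_{f}$, the subspace in \eqref{eq:nondeg-cond-primal}. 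Thus $\{d_{1}\mid\Gamma_{f^{*}}(\bar{u},\mathcal{A}\bar{x}-b)(d_{1})=0\}=L_{f}^{\perp}$, and symmetrically $\{d_{2}\mid\Gamma_{h^{*}}(\bar{v},\mathcal{B}\bar{x}-c)(d_{2})=0\}=L_{h}^{\perp}$.

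\emph{Conclusion and the main obstacle.}
Since $\Gamma_{f^{*}},\Gamma_{h^{*}}\geq 0$ on their domains, the simplified SSOSC holds iff there is no nonzero $(d_{1},d_{2})$ with $\mathcal{A}^{*}d_{1}+\mathcal{B}^{*}d_{2}=0$, $d_{1}\in L_{f}^{\perp}$, $d_{2}\in L_{h}^{\perp}$, that is,
\[
\Big(\operatorname{rge}\begin{pmatrix}\mathcal{A}\\\mathcal{B}\end{pmatrix}\Big)^{\perp}\cap\big(L_{f}^{\perp}\times L_{h}^{\perp}\big)=\{0\}.
\]
Taking orthogonal complements and using $L_{f}^{\perp}\times L_{h}^{\perp}=(L_{f}\times L_{h})^{\perp}$ turns this into $\operatorname{rge}\begin{pmatrix}\mathcal{A}\\\mathcal{B}\end{pmatrix}+(L_{f}\times L_{h})=\mathcal{R}^{m_{1}}\times\mathcal{R}^{m_{2}}$, which is exactly the nondegeneracy condition \eqref{eq:nondeg-cond-primal}. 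All steps being equivalences, part 1 is proved, and with it part 2 via the reduction above. I expect the main difficulty to be the key lemma of the second paragraph, namely showing that the set on which $\Gamma_{f^{*}}$ vanishes is a genuine subspace and that it equals $L_{f}^{\perp}$: a priori this set is only a union of kernels of Clarke Jacobian elements of $\operatorname{Prox}_{f}$, and its successive identification with $\operatorname{aff}\{d_{1}\mid\operatorname{Prox}_{f}'(z_{1};d_{1})=0\}$ and then with $\mathcal{C}_{f}^{\circ}$ requires Assumptions \ref{assump-1}--\ref{assumption-2} together with the coderivative and conjugate-critical-cone machinery of \cite{TangW2024}; by contrast, the symmetry reduction and the final linear algebra are routine.
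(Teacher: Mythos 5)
Your proposal is correct and follows essentially the same route as the paper: both reduce part 2 to part 1 by the primal--dual symmetry, pass to polars of the nondegeneracy condition, and identify the zero set of $\Gamma_{f^{*}}(\bar{u},\mathcal{A}\bar{x}-b)$ (resp.\ $\Gamma_{h^{*}}$) with the orthogonal complement of the lineality space appearing in \eqref{eq:nondeg-cond-primal}. The ``key lemma'' you reconstruct by hand via the Moreau identity, Assumptions \ref{assump-1}--\ref{assumption-2} and the critical-cone polarity is precisely the content of Proposition 4.6 of \cite{TangW2024}, which the paper invokes as a black box, so the two arguments coincide in substance.
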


\begin{proof}
	We only prove the first part of the theorem and the second part can be proven similarly. Firstly, the nondegeneracy condition (\ref{eq:nondeg-cond-primal}) can be written equivalently as
	\begin{align}
	&\operatorname{NULL}([\mathcal{A}^{*}\ \mathcal{B}^{*}])\cap\left(\begin{array}{cc}\Big\{d\, \Big|\, f^{\downarrow}_{-}(\mathcal{A}\bar{x}-b,\mathcal{A}d)=-f^{\downarrow}_{-}(\mathcal{A}\bar{x}-b,-\mathcal{A}d)\Big\}^{\circ}\\
	\Big\{d\, \Big|\, h^{\downarrow}_{-}(\mathcal{B}\bar{x}-c,\mathcal{B}d)=-h^{\downarrow}_{-}(\mathcal{B}\bar{x}-c,-\mathcal{B}d)\Big\}^{\circ}\end{array}\right)=\{0\}.\label{eq:nondeg-cond-primal-equiv-1}
	\end{align}
	Based on Proposition 4.6 of \cite{TangW2024}, we know that the condition (\ref{eq:nondeg-cond-primal-equiv-1}) is  equivalent to
	\begin{align*}
	&\operatorname{NULL}([\mathcal{A}^{*}\ \mathcal{B}^{*}])\cap\\
	&\left(\begin{array}{cc}\operatorname{aff}\Big\{d_{1}\, \Big|\, f^{*\downarrow}(\bar{u},d_{1})=\langle \mathcal{A}\bar{x}-b,d_{1}\rangle\Big\}\cap\Big\{d_{1}\, \Big|\, \Gamma_{f^{*}}(\bar{u},\mathcal{A}\bar{x}-b)(d_{1})=0\Big\}\\
	\operatorname{aff}\Big\{d_{2}\, \Big|\, h^{*\downarrow}(\bar{v},d_{2})=\langle \mathcal{B}\bar{x}-c,d_{2}\rangle\Big\}\cap\Big\{d_{2}\, \Big|\, \Gamma_{h^{*}}(\bar{v},\mathcal{B}\bar{x}-c)(d_{2})=0\Big\}\end{array}\right)=\{0\},
	\end{align*}
	which can also be written as
	\begin{align}\label{eq:nondeg-cond-primal-equiv-2}
	&\operatorname{aff}(\mathcal{C}_{2}(\bar{u},\bar{v}))\\
	&\qquad\cap\Big\{(d_{1},d_{2})\, \Big|\, \Gamma_{f^{*}}(\bar{u},\mathcal{A}\bar{x}-b)(d_{1})=0, \Gamma_{h^{*}}(\bar{v},\mathcal{B}\bar{x}-c)(d_{2})=0\Big\}=\{0\}.\nonumber
	\end{align}
	Since the SSOSC condition (\ref{eq:SSOSC-dual}) is also equivalent to the condition (\ref{eq:nondeg-cond-primal-equiv-2}), the first part of the results holds.
\end{proof}

Based on Theorems 5.7 of \cite{TangW2024} and Theorem \ref{thm:equiv-SSOSC-nondegeracy}, we present a specific set of equivalent characterizations about the perturbation analysis in the following theorem.
\begin{theorem}
	Let $(\bar{x},\bar{u},\bar{v})$ be a solution of the KKT system \eqref{composite-problem-kkt}.
	Suppose the functions $f$, $h$, $f^{*}$ and $h^{*}$ satisfy Assumption \ref{assump-1}, Assumption \ref{assumption-2} and Assumption \ref{assumption-3} at $\mathcal{A}\bar{x}-b$ for $\bar{u}$, at $\mathcal{B}\bar{x}-c$ for $\bar{v}$, at $\bar{u}$ for $\mathcal{A}\bar{x}-b$ and at $\bar{v}$ for $\mathcal{B}\bar{x}-c$, respectively.
	Then the following statements are equivalent to each other.	
	\begin{enumerate}
		\item The SSOSC \eqref{eq:SSOSC-primal} for the primal problem $(P')$ and the nondegeneracy condition \eqref{eq:nondeg-cond-primal} for the primal problem $(P')$ hold at $\bar{x}$.
		\item All of the elements of $\partial \widehat{R}(\bar{x},\bar{u},\bar{v})$ are nonsingular.
		\item The point $(\bar{x},\bar{u},\bar{v})$ is a strongly regular point of the KKT system \eqref{composite-problem-kkt}.
		\item $\widehat{R}$ is a locally Lipschitz homeomorphism near the KKT point $(\bar{x},\bar{u},\bar{v})$.
		\item The nondegeneracy condition \eqref{eq:nondeg-cond-primal} for the primal problem $(P')$ and the nondegeneracy condition \eqref{eq:nondeg-cond-dual} for the dual problem $(D')$ hold.
		\item The SSOSC \eqref{eq:SSOSC-primal} for the primal problem $(P')$ and the SSOSC \eqref{eq:SSOSC-dual} for the dual problem $(D')$ are both valid.
		\item The nondegeneracy condition \eqref{eq:nondeg-cond-dual} for the dual problem $(D')$ and the SSOSC \eqref{eq:SSOSC-dual} for the dual problem $(D')$ are both valid.
	\end{enumerate}
\end{theorem}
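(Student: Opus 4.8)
The plan is to read $(P')$ as the instance of $(P)$ with $F(x):=(\mathcal{A}x-b,\mathcal{B}x-c)$ and $g(a_{1},a_{2}):=f(a_{1})+h(a_{2})$, so that the standing hypotheses on $f,h$ become hypotheses on $g$ and those on $f^{*},h^{*}$ become hypotheses on the conjugate data, and then to glue together Theorem~5.7 of \cite{TangW2024} with the primal--dual equivalence of Theorem~\ref{thm:equiv-SSOSC-nondegeracy}. First I would apply Theorem~5.7 of \cite{TangW2024} to $(P')$: since all the assumptions are in force, this produces at once the equivalence of statement~(1) (the SSOSC \eqref{eq:SSOSC-primal} together with the nondegeneracy condition \eqref{eq:nondeg-cond-primal} for $(P')$), statement~(2) (nonsingularity of every element of $\partial\widehat{R}(\bar{x},\bar{u},\bar{v})$), statement~(3) (strong regularity of \eqref{composite-problem-kkt}), and statement~(4) ($\widehat{R}$ being a local Lipschitz homeomorphism). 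So the block $(1)\Leftrightarrow(2)\Leftrightarrow(3)\Leftrightarrow(4)$ costs only the bookkeeping of the reduction.

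It remains to fold in the symmetric statements (5), (6), (7). Write $A,B,C,D$ for, respectively, the SSOSC \eqref{eq:SSOSC-primal} of $(P')$, the nondegeneracy condition \eqref{eq:nondeg-cond-primal} of $(P')$, the SSOSC \eqref{eq:SSOSC-dual} of $(D')$, and the nondegeneracy condition \eqref{eq:nondeg-cond-dual} of $(D')$, so that $(1)=A\wedge B$, $(5)=B\wedge D$, $(6)=A\wedge C$ and $(7)=C\wedge D$. The engine is Theorem~\ref{thm:equiv-SSOSC-nondegeracy}: part~(1) gives $B\Leftrightarrow C$ \emph{provided} $\Lambda(\bar{u},\bar{v})=\{\bar{x}\}$, and part~(2) gives $D\Leftrightarrow A$ \emph{provided} $\Lambda(\bar{x})=\{(\bar{u},\bar{v})\}$. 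The implication $(1)\Rightarrow(5),(6),(7)$ is then painless: $(1)\Leftrightarrow(3)$ already supplies strong regularity, which makes the KKT triple $(\bar{x},\bar{u},\bar{v})$ locally unique; since $\Lambda(\bar{x})$ and $\Lambda(\bar{u},\bar{v})$ are convex (they are affine-preimages of the convex sets $\partial f(\mathcal{A}\bar{x}-b)\times\partial h(\mathcal{B}\bar{x}-c)$ and $\partial f^{*}(\bar{u})\times\partial h^{*}(\bar{v})$), local uniqueness forces them to be the singletons $\{(\bar{u},\bar{v})\}$ and $\{\bar{x}\}$, and then Theorem~\ref{thm:equiv-SSOSC-nondegeracy} upgrades $A\wedge B$ to $A\wedge B\wedge C\wedge D$, of which (5), (6), (7) are sub-pairs.

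The real content is the converse — that each of (5), (6), (7) already forces (1) — and here the obstacle is precisely that I may not invoke Theorem~\ref{thm:equiv-SSOSC-nondegeracy} until I know the relevant multiplier set is a singleton, strong regularity not yet being available. I would supply the missing uniqueness from two standard facts, used on $(P')$ and symmetrically on $(D')$: (a) constraint nondegeneracy at a point implies uniqueness of the associated multiplier, so $B$ forces $\Lambda(\bar{x})=\{(\bar{u},\bar{v})\}$ and $D$ forces $\Lambda(\bar{u},\bar{v})=\{\bar{x}\}$; and (b) under Assumption~\ref{assump-1} the relevant epigraphs are outer second order regular (Proposition~3.136 of \cite{Bonnans2000}), the SSOSC strengthens the SOSC, and the SOSC implies quadratic growth (Theorem~3.109 of \cite{Bonnans2000}), so by convexity quadratic growth at $\bar{x}$ (resp.\ at $(\bar{u},\bar{v})$) makes that point the unique optimal solution of $(P')$ (resp.\ of $(D')$), whence $\Lambda(\bar{u},\bar{v})=\{\bar{x}\}$ (resp.\ $\Lambda(\bar{x})=\{(\bar{u},\bar{v})\}$). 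The converses are then short: from $(5)=B\wedge D$, fact~(a) makes $\Lambda(\bar{x})$ a singleton, so Theorem~\ref{thm:equiv-SSOSC-nondegeracy}(2) yields $A$ from $D$, giving $(1)$; from $(7)=C\wedge D$, fact~(a) makes $\Lambda(\bar{u},\bar{v})$ a singleton, so Theorem~\ref{thm:equiv-SSOSC-nondegeracy}(1) yields $B$ from $C$, reducing $(7)$ to $(5)$; from $(6)=A\wedge C$, fact~(b) applied to $A$ makes $\Lambda(\bar{u},\bar{v})$ a singleton, so Theorem~\ref{thm:equiv-SSOSC-nondegeracy}(1) again yields $B$ from $C$, giving $(1)$.

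As a cross-check — and an independent route to $(7)\Leftrightarrow(3)$ — one may observe that $(D')$ is itself an instance of $(P)$ (via the affine map $(u,v)\mapsto(u,v,\mathcal{A}^{*}u+\mathcal{B}^{*}v)$ together with an additively separable, $C^{2}$-cone reducible outer function) whose KKT system is again \eqref{composite-problem-kkt}, so Theorem~5.7 of \cite{TangW2024} applied to $(D')$ gives $C\wedge D\Leftrightarrow$ strong regularity directly. The points where I expect the real care to be needed are the verification that the SSOSC genuinely implies the SOSC in the present $\Gamma_{g}$/$\varphi_{g}$ formulation — so that fact~(b) applies — and the routine but non-vacuous check that Assumptions~\ref{assump-1}--\ref{assumption-3} transfer faithfully to the product function $g$ and to the dual problem $(D')$; the rest is just chaining the two cited theorems.
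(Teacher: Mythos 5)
Your proposal is correct and follows exactly the route the paper indicates (the paper gives no written proof, only the remark that the theorem follows from Theorem 5.7 of \cite{TangW2024} combined with Theorem \ref{thm:equiv-SSOSC-nondegeracy}): Theorem 5.7 yields the block $(1)\Leftrightarrow(2)\Leftrightarrow(3)\Leftrightarrow(4)$, and the primal--dual equivalences fold in $(5)$--$(7)$. In fact you supply a step the paper silently omits --- verifying the singleton hypotheses $\Lambda(\bar{x})=\{(\bar{u},\bar{v})\}$ and $\Lambda(\bar{u},\bar{v})=\{\bar{x}\}$ needed to invoke Theorem \ref{thm:equiv-SSOSC-nondegeracy}, via nondegeneracy implying multiplier uniqueness and SSOSC implying quadratic growth and hence solution uniqueness --- so your reconstruction is, if anything, more complete than the paper's.
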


\section{The SSN based ALM}
\label{sec:SSN-ALM}

In this section, we introduce the ALM for the convex composite optimization problem $(P')$ with the inner subproblem solved by the SSN method.

\subsection{The ALM for problem $(P')$}
\label{subsec:ALM}
Firstly, we introduce the details about the ALM. Given $\sigma>0$, we define the augmented Lagrangian function related to problem $(P')$ as follows
\begin{align*}
\mathcal{L}_{\sigma}(x;u,v)&:=\sup_{{s\in\mathcal{R}^{m_{1}}}\atop{t\in\mathcal{R}^{m_{2}}}}\Big\{L(x,s,t)-\frac{1}{2\sigma}\|s-u\|^{2}-\frac{1}{2\sigma}\|t-v\|^{2}\Big\}\\&=-\sigma^{-1}e_{\sigma f^{*}}(u+\sigma(\mathcal{A}x-b))+\langle u,\mathcal{A}x-b\rangle+\frac{\sigma}{2}\|\mathcal{A}x-b\|^{2}\\
&\quad-\sigma^{-1}e_{\sigma h^{*}}(v+\sigma(\mathcal{B}x-c))+\langle v,\mathcal{B}x-c\rangle+\frac{\sigma}{2}\|\mathcal{B}x-c\|^{2}.
\end{align*}
In the following, we introduce the ALM in Algorithm \ref{alg-ssnal}.
\begin{algorithm}[H]
	\caption{ALM}\label{alg-ssnal}
	Given $\sigma_{0}>0$, $\rho\geq1$ and $u^{0}\in\mathcal{R}^{m_{1}}$, $v^{0}\in\mathcal{R}^{m_{2}}$. For $k=0,1,2,\ldots$, iterate the following steps.
	\begin{description}
		\item[1] Find an approximate solution
		\begin{align}\label{SSNAL-subproblem}
		x^{k+1}\approx\bar{x}^{k+1}=\mathop{\operatorname{argmin}}_{x\in\mathcal{R}^{n}}\Big\{\Phi_{k}(x;u^{k},v^{k}):=\mathcal{L}_{\sigma_{k}}(x;u^{k},v^{k})\Big\}.
		\end{align}
		\item[2] Compute
		\begin{align*}
		u^{k+1}=u^{k}+\sigma_{k}(\mathcal{A}x^{k+1}-b),\quad v^{k+1}=v^{k}+\sigma_{k}(\mathcal{B}x^{k+1}-c),
		\end{align*}
		and update $\sigma_{k+1}=\rho\sigma_{k}$.
	\end{description}
\end{algorithm}

For most cases, the inner subproblem \eqref{SSNAL-subproblem} has no closed-form solution and we can only find an approximate one with the following stopping criteria introduced in \cite{Rockafellar1976-1,Rockafellar1976-2}.
\begin{eqnarray*}
&(A)& \Phi_{k}(x^{k+1};u^{k},v^{k})-\inf_{x\in\mathcal{R}^{n}}\Phi_{k}(x;u^{k},v^{k})\leq\frac{\varepsilon_{k}^{2}}{2\sigma_{k}},\ \varepsilon_{k}\geq 0,\ \sum_{k=0}^{\infty}\varepsilon_{k}<\infty,\\
&(B)& \Phi_{k}(x^{k+1};u^{k},v^{k})-\inf_{x\in\mathcal{R}^{n}}\Phi_{k}(x;u^{k},v^{k})\leq\frac{\delta_{k}^{2}}{2\sigma_{k}}\|(u^{k+1},v^{k+1})-(u^{k},v^{k})\|^{2},\\
&&\qquad\qquad\qquad\qquad\qquad\qquad\qquad\qquad\qquad\qquad 0\leq\delta_{k}<1,\ \sum_{k=0}^{\infty}\delta_{k}<\infty,\\
&(B')&\quad\|\nabla\Phi_{k}(x^{k+1};u^{k},v^{k})\|\leq\frac{\delta'_{k}}{\sigma_{k}}\|(u^{k+1},v^{k+1})-(u^{k},v^{k})\|,\ 0\leq\delta'_{k}\rightarrow 0.
\end{eqnarray*}
The convergence result for the ALM follows easily from Theorems 1 of \cite{Rockafellar1976-1}, Theorem 4.2 of \cite{Luque1984} and Theorem 4 of \cite{Rockafellar1976-2}.
\begin{theorem}	
	Suppose that $\inf(P')<\infty$ and the ALM is performed with the stopping criterion $(A)$. If the sequence $\{(x^{k},u^{k},v^{k})\}$ generated by the ALM is bounded, then $x^{k}$ converges to $\bar{x}$, which is an optimal solution of problem $(P')$, and $(u^{k},v^{k})$ converges to an optimal solution of problem $(D')$.
\end{theorem}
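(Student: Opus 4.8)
The plan is to recognise the ALM as an inexact proximal point algorithm (PPA) on the dual problem $(D')$ and then to appeal to the classical convergence theory for the inexact PPA, which is precisely what the three references cited with the statement provide. First I would introduce the concave dual functional $\vartheta(u,v):=\inf_{x\in\mathcal{R}^{n}}L(x,u,v)$. Using the explicit form of $L$ recorded above, $\vartheta(u,v)=-f^{*}(u)-h^{*}(v)-\langle b,u\rangle-\langle c,v\rangle$ when $\mathcal{A}^{*}u+\mathcal{B}^{*}v=0$ and $\vartheta(u,v)=-\infty$ otherwise, so that $(D')$ is exactly $\min_{w}d(w)$ with $w:=(u,v)$ and $d:=-\vartheta$ a closed proper convex function (properness using $\inf(P')<\infty$ and well-posedness of the subproblems). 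Set $T:=\partial d$, a maximal monotone operator whose zero set $T^{-1}(0)$ is the set of optimal solutions of $(D')$.

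Next I would invoke Rockafellar's identity that the composite operation ``minimise $\Phi_{k}(\cdot\,;u^{k},v^{k})=\mathcal{L}_{\sigma_{k}}(\cdot\,;u^{k},v^{k})$ exactly in $x$, then update $(u^{k+1},v^{k+1})$ by Step~2 of Algorithm~\ref{alg-ssnal}'' produces exactly $w^{k+1}=(I+\sigma_{k}T)^{-1}(w^{k})$, the exact proximal step for $d$; this is verified by writing the stationarity condition $\nabla_{x}\Phi_{k}(x^{k+1};u^{k},v^{k})=0$ through the Moreau-envelope calculus and recognising it, together with the Step~2 formulas, as the inclusion $\sigma_{k}^{-1}(w^{k}-w^{k+1})\in\partial d(w^{k+1})$. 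For the inexact iterate actually produced under criterion~$(A)$, I would show that the value-gap estimate $\Phi_{k}(x^{k+1};u^{k},v^{k})-\inf_{x}\Phi_{k}(x;u^{k},v^{k})\le\varepsilon_{k}^{2}/(2\sigma_{k})$ forces $\|w^{k+1}-\hat w^{k+1}\|\le\varepsilon_{k}$ with $\hat w^{k+1}:=(I+\sigma_{k}T)^{-1}(w^{k})$; combined with $\sum_{k}\varepsilon_{k}<\infty$ this is exactly criterion~$(A)$ of \cite{Rockafellar1976-1} for the inexact PPA driven by $T$.

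With this reduction, Theorem~1 of \cite{Rockafellar1976-1}, together with its refinements Theorem~4.2 of \cite{Luque1984} and Theorem~4 of \cite{Rockafellar1976-2}, applies. For every $z\in T^{-1}(0)$ one obtains the Fej\'{e}r-type inequality $\|w^{k+1}-z\|\le\|w^{k}-z\|+\varepsilon_{k}$, so boundedness of $\{w^{k}\}$ supplies a cluster point $w^{\infty}$; since $\liminf_{k}\sigma_{k}\ge\sigma_{0}>0$ and $\sum_{k}\varepsilon_{k}<\infty$ one gets $\|w^{k+1}-w^{k}\|\to 0$ and hence $0\in T(w^{\infty})$, and then taking $z=w^{\infty}$ in the Fej\'{e}r inequality upgrades this to $w^{k}\to w^{\infty}=:(\bar u,\bar v)$, an optimal solution of $(D')$. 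For the primal sequence, the residual of the KKT system \eqref{composite-problem-kkt} at $(x^{k+1},u^{k+1},v^{k+1})$ is controlled by a constant multiple of $\|w^{k+1}-w^{k}\|$ plus the subproblem inaccuracy, hence tends to $0$, so every cluster point $\bar x$ of the bounded sequence $\{x^{k}\}$ satisfies \eqref{composite-problem-kkt} together with $(\bar u,\bar v)$ and is therefore optimal for $(P')$; the primal refinement in Theorem~4 of \cite{Rockafellar1976-2} then promotes this to convergence of the whole sequence $x^{k}\to\bar x$.

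I expect the main obstacle to be turning the objective-value stopping rule $(A)$ into the resolvent-accuracy bound $\|w^{k+1}-\hat w^{k+1}\|\le\varepsilon_{k}$: the function $\Phi_{k}$ need not be strongly convex in $x$, because $\mathcal{A}$ and $\mathcal{B}$ may fail to be injective, so one cannot directly bound the distance from $x^{k+1}$ to the exact minimiser by the value gap; instead the estimate has to be carried out on the dual side, exploiting the firm nonexpansiveness of the resolvent $(I+\sigma_{k}T)^{-1}$, i.e.\ the $\frac{1}{2\sigma_{k}}\|\cdot\|^{2}$-regularisation hidden inside the Moreau envelopes $e_{\sigma_{k}f^{*}}$ and $e_{\sigma_{k}h^{*}}$. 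A secondary difficulty is passing from subsequential to full convergence of $\{x^{k}\}$, which is exactly the point at which the boundedness hypothesis and the cited primal refinements of Luque and Rockafellar are needed.
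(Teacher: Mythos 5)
Your proposal is correct and follows essentially the same route as the paper, which itself gives no independent argument but derives the statement directly from Theorem~1 of Rockafellar (1976), Theorem~4.2 of Luque, and Theorem~4 of Rockafellar's augmented Lagrangian paper by viewing the ALM as an inexact proximal point iteration on the dual. Your additional discussion of how criterion $(A)$ yields the resolvent-accuracy bound on the dual side is exactly the mechanism underlying those cited results.
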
	

Let $D:=\operatorname{NULL}([\mathcal{A}^*\ \mathcal{B}^{*}])$ and $H(u,v):=f^{*}(u)+h^{*}(v)+\langle b,u\rangle+\langle c,v\rangle+\delta_{D}(u,v)$. Denote
\begin{align*}
&T(u,v):=\partial H(u,v),\quad T_{\widehat{g}}(x):=\partial (g\circ F)(x),\\
&T_{l}(x,u,v):=\Big\{(x',u',v')\,\Big|\, (x',-u',-v')\in\partial L(x,u,v)\Big\}.
\end{align*}
We state the convergence rate result for the ALM in the following theorem. One may also see \cite{CuiSun2019} for more details.
\begin{theorem}\label{theorem-alm-conv-rate}
	Suppose that $\inf(P')<\infty$ and the sequence $\{(x^{k},u^{k},v^{k})\}$ is generated by the ALM under the stopping criterion $(A)$ with $(u^{k},v^{k})$ converging to $(\bar{u},\bar{v})$, which is a solution of the dual problem $(D')$. If $T^{-1}$ is calm at the origin for $(\bar{u},\bar{v})$ with the modulus $a_{g}$. Then under the criterion $(B)$, for $k$ sufficiently large, we have that
	\begin{align*}
	\operatorname{dist}((u^{k+1},v^{k+1}),T^{-1}(0))\leq\theta_{k}\operatorname{dist}((u^{k},v^{k}),T^{-1}(0)),
	\end{align*}
	where
	\begin{align*}
	&\theta_{k}=\left[(\delta_{k}+1)a_{g}(a_{g}^{2}+\sigma_{k}^{2})^{-1/2}+\delta_{k}\right](1-\delta_{k})^{-1}\rightarrow\theta_{\infty}=a_{g}(a_{g}^{2}+\sigma_{\infty}^{2})^{-1/2}<1,\\
	&\theta_{\infty}=0,\quad\mbox{if}\quad \sigma_{\infty}=+\infty.
	\end{align*}
	Furthermore, if one also has the criterion $(B')$ and there exist positive constants $\varepsilon$ and $a_{l}$ such that
	\begin{align*}
	&\operatorname{dist}((x,u,v),T_{l}^{-1}(0))\leq a_{l}\|(x',u',v')\|,\\
	&\qquad\qquad\qquad\qquad\forall\, (x,u,v)\in T_{l}^{-1}(x',u',v'),\,(x',u',v')\in\mathbb{B}(0,\varepsilon),
	\end{align*}
	then for $k$ sufficiently large, one also has that
	\begin{align*}
	\operatorname{dist}(x^{k+1},T_{\widehat{g}}^{-1}(0))\leq\theta'_{k}\|(u^{k+1},v^{k+1})-(u^{k},v^{k})\|,
	\end{align*}
	where
	\begin{align*}
	&\theta'_{k}=a_{l}/\sigma_{k}(1+\delta_{k}^{\prime2})\rightarrow\theta'_{\infty}=a_{l}/\sigma_{\infty},\quad
	\theta'_{\infty}=0,\quad\mbox{if}\quad \sigma_{\infty}=+\infty.
	\end{align*}
\end{theorem}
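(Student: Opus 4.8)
The plan is to reduce the statement to the classical equivalence between the ALM for $(P')$ and the inexact proximal point algorithm (PPA) applied to the maximal monotone operator $T=\partial H$ governing the dual problem $(D')$, and then to invoke the known convergence-rate estimates of Rockafellar and Luque. First I would recall, following Theorem~4 of \cite{Rockafellar1976-2}, that solving Step~1 of Algorithm~\ref{alg-ssnal} exactly and forming the multiplier update in Step~2 produces precisely the proximal point $\hat{p}^{k+1}:=(I+\sigma_k T)^{-1}(u^k,v^k)=\operatorname{Prox}_{\sigma_k H}(u^k,v^k)$, which is single-valued because $T$ is maximal monotone, so that an approximate solution of Step~1 yields an inexact proximal step. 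I would then identify criterion $(A)$ as Rockafellar's summable-error rule, under which $\{(u^k,v^k)\}$ is well defined and --- being assumed to converge to $(\bar u,\bar v)$ here --- eventually lies in any prescribed neighbourhood of $(\bar u,\bar v)$; and, through the conjugate-duality relation between the inner objective $\Phi_k(\cdot;u^k,v^k)$ and the Moreau envelope $e_{\sigma_k H}$ of the dual objective, I would convert criterion $(B)$ into the relative error bound $\|(u^{k+1},v^{k+1})-\hat p^{k+1}\|\le\delta_k\|(u^{k+1},v^{k+1})-(u^k,v^k)\|$.

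Next I would establish the linear contraction of the exact proximal step. By the equivalence recorded after the definition of metric subregularity (Theorem~3H.3 of \cite{DontchevandRockafellar2009}), the calmness of $T^{-1}$ at the origin for $(\bar u,\bar v)$ with modulus $a_g$ is exactly the metric subregularity of $T$ at $(\bar u,\bar v)$ for $0$, i.e.\ the error bound $\operatorname{dist}(z,T^{-1}(0))\le a_g\operatorname{dist}(0,T(z))$ for $z$ near $(\bar u,\bar v)$. Applying this at $z=\hat p^{k+1}$ (legitimate once $k$ is large), using $\sigma_k^{-1}((u^k,v^k)-\hat p^{k+1})\in T(\hat p^{k+1})$, and combining with the firm nonexpansiveness of the resolvent and the resulting Pythagoras-type inequality yields $\operatorname{dist}(\hat p^{k+1},T^{-1}(0))\le a_g(a_g^2+\sigma_k^2)^{-1/2}\operatorname{dist}((u^k,v^k),T^{-1}(0))$, which is the estimate behind Theorem~2 of \cite{Rockafellar1976-1} and Theorem~4.2 of \cite{Luque1984}. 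A triangle inequality through $\hat p^{k+1}$, the relative error bound from $(B)$, and the elementary consequence $\|(u^{k+1},v^{k+1})-(u^k,v^k)\|\le(1-\delta_k)^{-1}\operatorname{dist}((u^k,v^k),T^{-1}(0))$ of firm nonexpansiveness then produce the asserted recursion with modulus $\theta_k$ and its limit $\theta_\infty<1$.

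For the statement on $x^{k+1}$, I would convert criterion $(B')$ into a bound on the KKT residual at $(x^{k+1},u^{k+1},v^{k+1})$ measured through $T_l$. Differentiating the augmented Lagrangian and using the multiplier update shows that $\nabla\Phi_k(x^{k+1};u^k,v^k)$ coincides with the first block $\mathcal{A}^*u^{k+1}+\mathcal{B}^*v^{k+1}$ of that residual, while the multiplier formulas force the second and third blocks to lie within $O(\sigma_k^{-1}\|(u^{k+1},v^{k+1})-(u^k,v^k)\|)$ of $0$; hence $(B')$ supplies an element $(x',u',v')\in T_l(x^{k+1},u^{k+1},v^{k+1})$ with $\|(x',u',v')\|\le\sigma_k^{-1}(1+(\delta'_k)^2)\|(u^{k+1},v^{k+1})-(u^k,v^k)\|$. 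Since the assumed error bound for $T_l$ at the origin restricts only the size of this residual, which tends to $0$ because $(u^k,v^k)$ converges, $\sigma_k\ge\sigma_0$ and $\delta'_k\to0$, it applies for $k$ large and gives $\operatorname{dist}((x^{k+1},u^{k+1},v^{k+1}),T_l^{-1}(0))\le a_l\|(x',u',v')\|$; finally $T_{\widehat g}^{-1}(0)$ contains the $x$-projection of $T_l^{-1}(0)$, so $\operatorname{dist}(x^{k+1},T_{\widehat g}^{-1}(0))$ is no larger, which is the claimed bound with modulus $\theta'_k$.

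I expect the main obstacle to be the bookkeeping in the first two paragraphs: making fully rigorous the passage from the value-gap criterion $(B)$ to a relative proximal-point error (which rests on identifying $\inf_x\Phi_k$ with a Moreau envelope of $H$ by conjugate duality) and combining the error bound with firm nonexpansiveness so that precisely the stated modulus $\theta_k$, rather than a cruder one, emerges. Once these are in place, the remainder is routine assembly of results already contained in \cite{Rockafellar1976-1,Rockafellar1976-2,Luque1984} and \cite{CuiSun2019}.
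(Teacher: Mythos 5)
Your proposal is correct and follows essentially the same route the paper relies on: the paper gives no written proof of this theorem, simply stating it with a pointer to \cite{CuiSun2019} and the classical results of \cite{Rockafellar1976-1,Rockafellar1976-2,Luque1984}, which are exactly the ALM--inexact-PPA equivalence, the conversion of criterion $(B)$ into a relative proximal error, and the calmness/metric-subregularity error bound that you assemble. Your sketch in fact supplies more of the bookkeeping than the paper does, and the one point you flag as delicate (recovering the exact modulus $\theta_k$ with the $(\delta_k+1)$ factor rather than a cruder constant) is precisely the refinement carried out in \cite{CuiSun2019}.
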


As we know from 3H of \cite{DontchevandRockafellar2009} that the metric subregularity is equivalent to the calmness property. Furthermore, it is also known from \cite{Francisco2008} that the metric subregularity of the dual solution mapping at a dual optimal solution for the origin is equivalent to the quadratic growth condition at the corresponding dual optimal solution. Combining Proposition 3 of \cite{CuiSun2019}, it follows that we need the quadratic growth condition to guarantee the superlinear convergence rate of the dual sequence generated by the ALM. However, it is usually difficult to check the quadratic growth condition directly and one may deal with it alternatively by looking for other sufficient conditions. One of such conditions is the second order sufficient condition (SOSC). As shown in Theorem 3.109 of \cite{Bonnans2000}, under the conditions that $g^{*}$ is a lower semicontinuous proper convex function and the set $\operatorname{epi}g^{*}$ is outer second order regular, the SOSC implies the quadratic growth condition.

In the follows, we introduce the equivalence of the SRCQ of the primal/dual problem and the SOSC of the dual (primal) problem. We first write out the SRCQ and SOSC for the primal and dual problems, respectively.

Let $\bar{x}$ be an optimal solution of the primal problem $(P')$ and $(\bar{u},\bar{v})$ be a corresponding solution of the dual problem $(D')$. Then the SRCQ for the primal problem $(P')$ at $\bar{x}$ for $(\bar{u},\bar{v})$ and the SRCQ for the dual problem $(D')$ at $(\bar{u},\bar{v})$ for $\bar{x}$ are
\begin{align}
&\left(\begin{array}{cc}\mathcal{A}\\\mathcal{B}\end{array}\right)\mathcal{R}^{n}-\left(\begin{array}{cc}\Big\{d\, \Big|\, f^{\downarrow}_{-}(\mathcal{A}\bar{x}-b,\mathcal{A}d)=\langle\bar{u},d\rangle\Big\}\\
\Big\{d\, \Big|\, h^{\downarrow}_{-}(\mathcal{B}\bar{x}-c,\mathcal{B}d)=\langle\bar{v},d\rangle\Big\}\end{array}\right)=\left(\begin{array}{cc}\mathcal{R}^{m_{1}}\\\mathcal{R}^{m_{2}}\end{array}\right)\label{SRCQ-primal}
\end{align}	
and
\begin{align}
&\mathcal{A}^{*}\Big\{d_{1}\, \Big|\, f^{*\downarrow}_{-}(\bar{u},d_{1})=\langle\mathcal{A}\bar{x}-b,d_{1}\rangle\Big\}+\mathcal{B}^{*}\Big\{d_{2}\, \Big|\, h^{*\downarrow}_{-}(\bar{v},d_{2})=\langle\mathcal{B}\bar{x}-c,d_{2}\rangle\Big\}=\mathcal{R}^{n},\label{SRCQ-dual}
\end{align}
respectively. Suppose that $f$ and $h$ are outer second order regular. We can also write out the SOSC for the primal problem $(P')$ at $\bar{x}$ for $(\bar{u},\bar{v})$ as
\begin{align}
\varphi_{f}(\mathcal{A}\bar{x}-b,\bar{u})(\mathcal{A}d)+\varphi_{h}(\mathcal{B}\bar{x}-c,\bar{v})(\mathcal{B}d)>0,\, \forall\, d\in\mathcal{C}_{1}(\bar{x})\setminus\{0\},\label{SOSC-primal}
\end{align}
where
\begin{align*}
\mathcal{C}_{1}(\bar{x})&=\Big\{d\, \Big|\, f^{\downarrow}_{-}(\mathcal{A}\bar{x}-b,\mathcal{A}d)+h^{\downarrow}_{-}(\mathcal{B}\bar{x}-c,\mathcal{B}d)=0\Big\}\\
&=\Big\{d\, \Big|\, f^{\downarrow}_{-}(\mathcal{A}\bar{x}-b,\mathcal{A}d)=\langle \bar{u},\mathcal{A}d \rangle, h^{\downarrow}_{-}(\mathcal{B}\bar{x}-c,\mathcal{B}d)=\langle \bar{v},\mathcal{B}d \rangle\Big\}
\end{align*}
is the critical cone of the primal problem $(P')$ at $\bar{x}$. Suppose that $f^{*}$ and $h^{*}$ are outer second order regular. The SOSC for the dual problem $(D')$ at $(\bar{u},\bar{v})$ for $\bar{x}$ takes the following form
\begin{align}
\varphi_{f^{*}}(\bar{u},\mathcal{A}\bar{x}-b)(d_{1})+\varphi_{h^{*}}(\bar{v},\mathcal{B}\bar{x}-c)(d_{2})>0,\, \forall\, (d_{1},d_{2})\in\mathcal{C}_{2}(\bar{u},\bar{v})\setminus\{0\},\label{SOSC-dual}
\end{align}
where
\begin{align*}
&\mathcal{C}_{2}(\bar{u},\bar{v})\\
&=\Big\{(d_{1},d_{2})\, \Big|\, f^{*\downarrow}_{-}(\bar{u},d_{1})+\langle b,d_{1}\rangle+h^{*\downarrow}_{-}(\bar{v},d_{2})+\langle c,d_{2}\rangle=0,\mathcal{A}^{*}d_{1}+\mathcal{B}^{*}d_{2}=0\Big\}\\
&=\Big\{(d_{1},d_{2})\, \Big|\, f^{*\downarrow}_{-}(\bar{u},d_{1})=\langle \mathcal{A}\bar{x}-b,d_{1} \rangle,  h^{*\downarrow}_{-}(\bar{v},d_{2})=\langle \mathcal{B}\bar{x}-c,d_{2} \rangle,  \mathcal{A}^{*}d_{1}+\mathcal{B}^{*}d_{2}=0\Big\}
\end{align*}
is the critical cone of the dual problem $(D')$ at $(\bar{u},\bar{v})$.

Now we are ready to characterize the equivalence of the SRCQ and the SOSC.
\begin{theorem}\label{thm-1}
	For problem $(P')$, we have
	\begin{enumerate}	
		\item If $f^{*}$ and $h^{*}$ are $C^{2}$-cone reducible functions, the SRCQ at $(\bar{u},\bar{v})$ for $\bar{x}$ of the dual problem $(D')$ holds if and only if the SOSC at $\bar{x}$ for $(\bar{u},\bar{v})$ of the primal problem $(P')$ is valid.
		\item If $f$ and $h$ are $C^{2}$-cone reducible functions, the SRCQ at $\bar{x}$ for $(\bar{u},\bar{v})$ of the primal problem $(P')$ holds if and only if the SOSC at $(\bar{u},\bar{v})$ for $\bar{x}$ of the dual problem $(D')$ is valid,
	\end{enumerate}
\end{theorem}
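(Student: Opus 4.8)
The plan is to mirror the argument used for Theorem~\ref{thm:equiv-SSOSC-nondegeracy}: rewrite the SRCQ as a polarity condition, transport it through the conjugate duality between critical cones and curvature terms, and recognize the outcome as the SOSC. It suffices to prove the first assertion, as the second is obtained from it by swapping the roles of $(P')$ and $(D')$ — equivalently, of $(f,h,\mathcal{A}\bar{x}-b,\mathcal{B}\bar{x}-c)$ and $(f^{*},h^{*},\bar{u},\bar{v})$ — and using $f^{**}=f$, $h^{**}=h$.

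First I would observe that $\mathcal{K}_{1}:=\{d_{1}\mid f^{*\downarrow}_{-}(\bar{u},d_{1})=\langle\mathcal{A}\bar{x}-b,d_{1}\rangle\}$ and $\mathcal{K}_{2}:=\{d_{2}\mid h^{*\downarrow}_{-}(\bar{v},d_{2})=\langle\mathcal{B}\bar{x}-c,d_{2}\rangle\}$ are the critical cones of $f^{*}$ at $\bar{u}$ for the subgradient $\mathcal{A}\bar{x}-b\in\partial f^{*}(\bar{u})$ and of $h^{*}$ at $\bar{v}$ for $\mathcal{B}\bar{x}-c\in\partial h^{*}(\bar{v})$, respectively; under the $C^{2}$-cone reducibility of $f^{*}$ and $h^{*}$ these are closed convex cones. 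The SRCQ \eqref{SRCQ-dual} reads $\mathcal{A}^{*}\mathcal{K}_{1}+\mathcal{B}^{*}\mathcal{K}_{2}=\mathcal{R}^{n}$, and taking polars (a convex cone equals $\mathcal{R}^{n}$ precisely when its polar is $\{0\}$) this becomes the equivalent statement
\begin{align*}
\Big\{d\in\mathcal{R}^{n}\,\Big|\,\mathcal{A}d\in\mathcal{K}_{1}^{\circ},\ \mathcal{B}d\in\mathcal{K}_{2}^{\circ}\Big\}=\{0\}.
\end{align*}

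The decisive step is a conjugate-duality identity stating that, under the standing assumptions, the polar of the critical cone of $f^{*}$ is cut out from the critical cone of $f$ by the vanishing of the curvature term $\varphi_{f}$:
\begin{align*}
\mathcal{K}_{1}^{\circ}=\Big\{s\,\Big|\,f^{\downarrow}_{-}(\mathcal{A}\bar{x}-b,s)=\langle\bar{u},s\rangle\ \text{and}\ \varphi_{f}(\mathcal{A}\bar{x}-b,\bar{u})(s)=0\Big\},
\end{align*}
together with the analogous formula for $\mathcal{K}_{2}$, $h$ and $\varphi_{h}$. This is, for the SRCQ/SOSC pair, the counterpart of the critical-cone identity (Proposition~4.6 of \cite{TangW2024}) that underpins the proof of Theorem~\ref{thm:equiv-SSOSC-nondegeracy} for the nondegeneracy/SSOSC pair, and I expect establishing it to be the main obstacle. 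I would prove it by reducing both sides through the $C^{2}$-cone reduction of $f$ at $\mathcal{A}\bar{x}-b$: convexity of $f$ forces $\varphi_{f}(\mathcal{A}\bar{x}-b,\bar{u})(\cdot)$ to be nonnegative on the critical cone $\{s\mid f^{\downarrow}_{-}(\mathcal{A}\bar{x}-b,s)=\langle\bar{u},s\rangle\}$, where it is moreover a quadratic form, so that its zero set is a convex cone; one then matches this cone with $\mathcal{K}_{1}^{\circ}$ using the standard conjugacy between a $C^{2}$-reduced cone and its polar.

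Granting the identity and substituting it, together with its $h$-counterpart, into the displayed polar form of the SRCQ, and recalling that by definition $\mathcal{C}_{1}(\bar{x})=\{d\mid f^{\downarrow}_{-}(\mathcal{A}\bar{x}-b,\mathcal{A}d)=\langle\bar{u},\mathcal{A}d\rangle,\ h^{\downarrow}_{-}(\mathcal{B}\bar{x}-c,\mathcal{B}d)=\langle\bar{v},\mathcal{B}d\rangle\}$, the SRCQ \eqref{SRCQ-dual} is equivalent to
\begin{align*}
\Big\{d\in\mathcal{C}_{1}(\bar{x})\,\Big|\,\varphi_{f}(\mathcal{A}\bar{x}-b,\bar{u})(\mathcal{A}d)=0,\ \varphi_{h}(\mathcal{B}\bar{x}-c,\bar{v})(\mathcal{B}d)=0\Big\}=\{0\}.
\end{align*}
Finally, since $\varphi_{f}(\mathcal{A}\bar{x}-b,\bar{u})(\mathcal{A}d)\geq0$ and $\varphi_{h}(\mathcal{B}\bar{x}-c,\bar{v})(\mathcal{B}d)\geq0$ for every $d\in\mathcal{C}_{1}(\bar{x})$, their sum vanishes exactly when both do, so the last display is equivalent to
\begin{align*}
\varphi_{f}(\mathcal{A}\bar{x}-b,\bar{u})(\mathcal{A}d)+\varphi_{h}(\mathcal{B}\bar{x}-c,\bar{v})(\mathcal{B}d)>0,\qquad\forall\,d\in\mathcal{C}_{1}(\bar{x})\setminus\{0\},
\end{align*}
which is precisely the SOSC \eqref{SOSC-primal} of the primal problem $(P')$. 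Running the same chain of equivalences with the primal and dual roles exchanged yields the second assertion.
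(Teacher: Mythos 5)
Your proposal follows essentially the same route as the paper: polarize the SRCQ \eqref{SRCQ-dual}, pass through the duality identity relating $\mathcal{K}_{1}^{\circ}$ and $\mathcal{K}_{2}^{\circ}$ to the vanishing sets of $\varphi_{f}$ and $\varphi_{h}$ inside the primal critical cone, and conclude via the nonnegativity of the curvature terms. The identity you single out as ``the main obstacle'' is exactly what the paper imports wholesale as Proposition~3.4 of \cite{TangW2024} (valid under the outer second order regularity implied by $C^{2}$-cone reducibility), so it need not be re-proved; otherwise the argument, including the symmetric treatment of the second assertion, matches the paper's proof.
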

\begin{proof}
	We first prove that the SRCQ \eqref{SRCQ-dual} is equivalent to the SOSC (\ref{SOSC-primal}). Taking the polar operation on both sides of (\ref{SRCQ-dual}), by Corollary 16.4.2 of \cite{Rockafellar1970} the SRCQ (\ref{SRCQ-dual}) can be written equivalently as
	that
	\begin{align}\label{eq:SRCQ-dual-equiv-1}
	&\mathcal{A}d\in\Big\{d_{1}\, \Big|\, f^{*\downarrow}_{-}(\bar{u},d_{1})=\langle\mathcal{A}\bar{x}-b,d_{1}\rangle\Big\}^{\circ},\  \mathcal{B}d\in\Big\{d_{2}\, \Big|\, h^{*\downarrow}_{-}(\bar{v},d_{2})=\langle\mathcal{B}\bar{x}-c,d_{2}\rangle\Big\}^{\circ}\nonumber\\
	&\qquad\qquad\qquad\qquad\qquad\Longrightarrow d=0.
	\end{align}
	Due to Proposition 3.4 of \cite{TangW2024} and the fact that $f$ and $h$ are outer second order regular at $\mathcal{A}\bar{x}-b$ and $\mathcal{B}\bar{x}-c$, respectively, we know that the condition (\ref{eq:SRCQ-dual-equiv-1}) is also equivalent to
	\begin{align}
	&\Big\{d\, \Big|\, f^{\downarrow}_{-}(\mathcal{A}\bar{x}-b,\mathcal{A}d)=\langle\bar{u},\mathcal{A}d\rangle,  \varphi_{f}(\mathcal{A}\bar{x}-b,\bar{u})(\mathcal{A}d)=0\Big\}\nonumber\\
	&\quad\cap\Big\{d\, \Big|\, h^{\downarrow}_{-}(\mathcal{B}\bar{x}-c,\mathcal{B}d)=\langle\bar{v},\mathcal{B}d\rangle,  \varphi_{h}(\mathcal{B}\bar{x}-c,\bar{v})(\mathcal{B}d)=0\Big\}=\{0\}.\label{eq:SRCQ-dual-equiv-2}
	\end{align}
	Due to the definition of $\mathcal{C}_{1}(\bar{x})$, the condition (\ref{eq:SRCQ-dual-equiv-2}) is valid if and only if
	\begin{align}
	\mathcal{C}_{1}(\bar{x})\cap\Big\{d\, \Big|\, \varphi_{f}(\mathcal{A}\bar{x}-b,\bar{u})(\mathcal{A}d)=0,  \varphi_{h}(\mathcal{B}\bar{x}-c,\bar{v})(\mathcal{B}d)=0\Big\}=\{0\}\label{eq:SRCQ-dual-equiv-3}
	\end{align}
	holds. Since $\varphi_{f}(\mathcal{A}\bar{x}-b,\bar{u})(\mathcal{A}d)\geq0$ and $\varphi_{h}(\mathcal{B}\bar{x}-c,\bar{v})(\mathcal{B}d)\geq0$, the SOSC condition (\ref{SOSC-primal}) is also equivalent to (\ref{eq:SRCQ-dual-equiv-3}). Therefore, the first part of the results can be obtained easily.
	
	Similarly, the SRCQ \eqref{SRCQ-primal} holds if and only if
	\begin{align}
	\operatorname{NULL}([\mathcal{A}^{*}\ \mathcal{B}^{*}]) \cap
	\left(\begin{array}{cc}\Big\{d\,\Big|\, f^{\downarrow}_{-}(\mathcal{A}\bar{x}-b,\mathcal{A}d)=\langle\bar{u},d\rangle\Big\}^{\circ}\\
	\Big\{d\,\Big|\, h^{\downarrow}_{-}(\mathcal{B}\bar{x}-c,\mathcal{B}d)=\langle\bar{v},d\rangle\Big\}^{\circ}\end{array}\right)=\{0\}
	\label{eq:SRCQ-equiv-1}
	\end{align}
	is valid. Due to Proposition 3.4 of \cite{TangW2024}, the condition (\ref{eq:SRCQ-equiv-1}) is equivalent to
	\begin{align}
	&\operatorname{NULL}([\mathcal{A}^{*}\ \mathcal{B}^{*}]\nonumber\\
	&\quad\cap\left(\begin{array}{cc}\Big\{d_{1}\, \Big|\, f^{*\downarrow}_{-}(\bar{u},d_{1})=\langle \mathcal{A}\bar{x}-b,d_{1}\rangle, \varphi_{f^{*}}(\bar{u},\mathcal{A}\bar{x}-b)(d_{1})=0\Big\}\\
	\Big\{d_{2}\, \Big|\, h^{*\downarrow}_{-}(\bar{v},d_{2})=\langle \mathcal{B}\bar{x}-c,d_{2}\rangle, \varphi_{h^{*}}(\bar{v},\mathcal{B}\bar{x}-c)(d_{2})=0\Big\}\end{array}\right)=\{0\}.\label{eq:SRCQ-equiv-2}
	\end{align}
	Based on the definition of $\mathcal{C}_{2}(\bar{u},\bar{v})$, the condition (\ref{eq:SRCQ-equiv-2})  is valid if and only if
	\begin{align}
	\mathcal{C}_{2}(\bar{u},\bar{v})\cap\Big\{(d_{1},d_{2})\, \Big|\, \varphi_{f^{*}}(\bar{u},\mathcal{A}\bar{x}-b)(d_{1})=0,  \varphi_{h^{*}}(\bar{v},\mathcal{B}\bar{x}-c)(d_{2})=0\Big\}=\{0\}\label{eq:SRCQ-equiv-3}
	\end{align}
	holds. Since $\varphi_{f^{*}}(\bar{u},\mathcal{A}\bar{x}-b)(d_{1})\geq 0$ and $\varphi_{h^{*}}(\bar{v},\mathcal{B}\bar{x}-c)(d_{2})\geq 0$, the SOSC condition (\ref{SOSC-dual}) is also equivalent to (\ref{eq:SRCQ-equiv-3}). Therefore, the second part of the results can also be obtained and this completes the proof.
	
\end{proof}

\subsection{The SSN for the inner subproblem \eqref{SSNAL-subproblem}}
\label{subsec:SSN}
The main challenge of the ALM is that we need to find an approximate solution with the given accuracy for the inner subproblem efficiently. Due to the continuous differentiability of the Moreau envelope functions for $f$ and $g$, the function $\Phi_{k}(\cdot;u^{k},v^{k})$ is smooth and solving the subproblem (\ref{SSNAL-subproblem}) is equivalent to finding a solution of the following system of equations
\begin{align}\label{eq:subproblem-nonlinear-eq}
\nabla\Phi_{k}(x;u^{k},v^{k})&=\mathcal{A}^{*}\operatorname{Prox}_{\sigma_{k} f^{*}}(u^{k}+\sigma_{k}(\mathcal{A}x-b))\nonumber\\
&\qquad+\mathcal{B}^{*}\operatorname{Prox}_{\sigma_{k} h^{*}}(v^{k}+\sigma_{k}(\mathcal{B}x-c))=0.
\end{align}
We apply the SSN method to obtain an approximate solution of problem (\ref{eq:subproblem-nonlinear-eq}). Since $\operatorname{Prox}_{\sigma f^{*}}$ and $\operatorname{Prox}_{\sigma g^{*}}$ are Lipschitz continuous, the following multifunction is well defined.
\begin{align*}
\hat{\partial}^{2}\Phi_{k}(x;u^{k},v^{k})&:=\sigma_{k}\mathcal{A}^{*}\partial\operatorname{Prox}_{\sigma_{k} f^{*}}(u^{k}+\sigma_{k}(\mathcal{A}x-b))\mathcal{A}\\
&\qquad+\sigma_{k}\mathcal{B}^{*}\partial\operatorname{Prox}_{\sigma_{k} h^{*}}(v^{k}+\sigma_{k}(\mathcal{B}x-c))\mathcal{B}.
\end{align*}
Let $U^{k}\in\partial\operatorname{Prox}_{\sigma_{k} f^{*}}(u^{k}+\sigma_{k}(\mathcal{A}x-b))$ and $V^{k}\in\partial\operatorname{Prox}_{\sigma_{k} h^{*}}(v^{k}+\sigma_{k}(\mathcal{B}x-c))$, we have $\sigma_{k}\mathcal{A}^{*}U^{k}\mathcal{A}+\sigma_{k}\mathcal{B}^{*}V^{k}\mathcal{B}\in\hat{\partial}^{2}\Phi_{k}(x;u^{k},v^{k})$.

For simplicity, we omit the subscripts or superscripts of $\sigma_{k}$, $\Phi_{k}$, $U^{k}$ and $V^{k}$. Let $(\bar{x},\bar{u},\bar{v})$ be a solution of the KKT system \eqref{composite-problem-kkt}.  The following proposition presents an equivalent condition to guarantee that each element of the set $\hat{\partial}^{2}\Phi(\bar{x};\bar{u},\bar{v})$ is nonsingular.

\begin{theorem}\label{theorem-nonsingularity}
	Let $(\bar{x},\bar{u},\bar{v})$ be a solution of the KKT system \eqref{composite-problem-kkt}.
	Suppose that the functions $f^{*}$ and $h^{*}$ satisfy Assumption \ref{assump-1}, Assumption \ref{assumption-2} at $\bar{u}$ for $\mathcal{A}\bar{x}-b$ and at $\bar{v}$ for $\mathcal{B}\bar{x}-c$, respectively.
	Then the nondegeneracy condition (\ref{eq:nondeg-cond-dual}) holds if and only if all the elements of $\hat{\partial}^{2}\Phi(\bar{x};\bar{u},\bar{v})$ are nonsingular.
\end{theorem}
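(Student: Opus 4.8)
The plan is to reduce the statement ``every element of $\hat{\partial}^{2}\Phi(\bar{x};\bar{u},\bar{v})$ is nonsingular'' to a subspace--intersection condition and to recognize that condition as the orthogonal--complement form of the nondegeneracy condition \eqref{eq:nondeg-cond-dual}. First I would use the KKT conditions: since $(\bar{x},\bar{u},\bar{v})$ solves \eqref{composite-problem-kkt} we have $\mathcal{A}\bar{x}-b\in\partial f^{*}(\bar{u})$ and $\mathcal{B}\bar{x}-c\in\partial h^{*}(\bar{v})$, hence $\operatorname{Prox}_{\sigma f^{*}}(\bar{u}+\sigma(\mathcal{A}\bar{x}-b))=\bar{u}$ and $\operatorname{Prox}_{\sigma h^{*}}(\bar{v}+\sigma(\mathcal{B}\bar{x}-c))=\bar{v}$, so the generalized Jacobians entering $\hat{\partial}^{2}\Phi(\bar{x};\bar{u},\bar{v})$ are evaluated at these fixed points. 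Every $U\in\partial\operatorname{Prox}_{\sigma f^{*}}(\bar{u}+\sigma(\mathcal{A}\bar{x}-b))$ and $V\in\partial\operatorname{Prox}_{\sigma h^{*}}(\bar{v}+\sigma(\mathcal{B}\bar{x}-c))$ is self-adjoint with $0\preceq U\preceq I$ and $0\preceq V\preceq I$, a standard consequence of $e_{\sigma f^{*}}$ being convex with $\sigma^{-1}$-Lipschitz gradient; hence each $M=\sigma\mathcal{A}^{*}U\mathcal{A}+\sigma\mathcal{B}^{*}V\mathcal{B}$ is self-adjoint positive semidefinite, so $Md=0$ iff $\langle d,Md\rangle=\sigma\langle\mathcal{A}d,U\mathcal{A}d\rangle+\sigma\langle\mathcal{B}d,V\mathcal{B}d\rangle=0$ iff $U\mathcal{A}d=0$ and $V\mathcal{B}d=0$. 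Since $U$ and $V$ vary independently, nonsingularity of every $M$ fails exactly when some $d\neq0$ satisfies $\mathcal{A}d\in\Omega_{f}$ and $\mathcal{B}d\in\Omega_{h}$, where $\Omega_{f}:=\bigcup_{U}\operatorname{NULL}(U)$ and $\Omega_{h}:=\bigcup_{V}\operatorname{NULL}(V)$ (taking convex combinations only shrinks kernels, so these unions are unchanged whether $U,V$ range over the Clarke or the $B$-subdifferentials). Thus the nonsingularity statement is equivalent to $\{d\mid\mathcal{A}d\in\Omega_{f},\ \mathcal{B}d\in\Omega_{h}\}=\{0\}$.

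On the other side, I would take orthogonal complements in \eqref{eq:nondeg-cond-dual}. The set $\mathcal{L}_{f}:=\{d_{1}\mid f^{*\downarrow}_{-}(\bar{u},d_{1})=-f^{*\downarrow}_{-}(\bar{u},-d_{1})\}$ is the linearity subspace of the sublinear function $f^{*\downarrow}_{-}(\bar{u},\cdot)$ (the support function of $\partial f^{*}(\bar{u})$), whose annihilator is $\mathcal{L}_{f}^{\perp}=\operatorname{aff}\partial f^{*}(\bar{u})-(\mathcal{A}\bar{x}-b)$, and likewise $\mathcal{L}_{h}^{\perp}=\operatorname{aff}\partial h^{*}(\bar{v})-(\mathcal{B}\bar{x}-c)$. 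Since \eqref{eq:nondeg-cond-dual} reads $\mathcal{A}^{*}\mathcal{L}_{f}+\mathcal{B}^{*}\mathcal{L}_{h}=\mathcal{R}^{n}$, passing to complements turns it into $\{d\mid\mathcal{A}d\in\mathcal{L}_{f}^{\perp},\ \mathcal{B}d\in\mathcal{L}_{h}^{\perp}\}=\{0\}$. Comparing the two displayed conditions, the theorem reduces to the identities $\Omega_{f}=\mathcal{L}_{f}^{\perp}$ and $\Omega_{h}=\mathcal{L}_{h}^{\perp}$. This is the heart of the matter and the step I expect to be the main obstacle: one must translate the Clarke-Jacobian object $\bigcup_{U}\operatorname{NULL}(U)$, which a priori need not even be a linear subspace, into the directional-derivative language appearing in \eqref{eq:nondeg-cond-dual}.

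To prove $\Omega_{f}=\mathcal{L}_{f}^{\perp}$ I would invoke Assumption \ref{assump-1} and Assumption \ref{assumption-2} for $f^{*}$ at $\bar{u}$ for $\mathcal{A}\bar{x}-b$ and sandwich $\Omega_{f}$. Writing $\bar{z}_{f}:=\bar{u}+(\mathcal{A}\bar{x}-b)$ (the scalar $\sigma$ does not affect kernels): semismoothness of $\operatorname{Prox}_{f^{*}}$ together with positive semidefiniteness of its generalized Jacobians gives $S_{f}:=\{d_{1}\mid\operatorname{Prox}_{f^{*}}'(\bar{z}_{f};d_{1})=0\}\subseteq\Omega_{f}$, while the inclusion $\partial_{B}\operatorname{Prox}_{f^{*}}(\bar{z}_{f})^{*}y\subseteq D^{*}\operatorname{Prox}_{f^{*}}(\bar{z}_{f})(y)$ (obtained from limits of graphs of Jacobians) together with self-adjointness gives $\Omega_{f}\subseteq\{d_{1}\mid 0\in D^{*}\operatorname{Prox}_{f^{*}}(\bar{z}_{f})(d_{1})\}$; Assumption \ref{assumption-2} identifies the latter with $\operatorname{aff}(S_{f})$, so $S_{f}\subseteq\Omega_{f}\subseteq\operatorname{aff}(S_{f})$. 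It then remains to show that $\Omega_{f}$ is itself a subspace and that $\operatorname{aff}(S_{f})=\mathcal{L}_{f}^{\perp}$; for this I would use the $C^{2}$-cone reduction of $\operatorname{epi}f^{*}$ at $(\bar{u},f^{*}(\bar{u}))$, which expresses $\operatorname{Prox}_{f^{*}}$ near $\bar{z}_{f}$ through a smooth submersion composed with the metric projection onto a pointed closed convex cone, making the $B$-subdifferential explicit enough to evaluate $\bigcup_{U}\operatorname{NULL}(U)$ and match it with $\mathcal{L}_{f}^{\perp}$; the requisite computations are carried out for the standard conic functions in Appendix A of \cite{TangW2024}, and I would lean on those rather than redo them. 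The same argument for $h^{*}$ at $\bar{v}$ for $\mathcal{B}\bar{x}-c$ gives $\Omega_{h}=\mathcal{L}_{h}^{\perp}$, and substituting the two identities into the displayed conditions from the previous paragraphs yields the claimed equivalence at once.
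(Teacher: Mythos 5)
Your proposal is correct and follows essentially the same route as the paper: reduce nonsingularity of $E=\sigma\mathcal{A}^{*}U\mathcal{A}+\sigma\mathcal{B}^{*}V\mathcal{B}$ to $U\mathcal{A}d=0$ and $V\mathcal{B}d=0$ via positive semidefiniteness, identify the union of the kernels of the Clarke Jacobians of the proximal mappings with the polar of the lineality space of $f^{*\downarrow}_{-}(\bar{u},\cdot)$ (resp.\ $h^{*\downarrow}_{-}(\bar{v},\cdot)$), and pass to polars to match \eqref{eq:nondeg-cond-dual}. The identification you single out as the main obstacle is precisely what the paper outsources to Lemma 4.2 and Proposition 4.6 of \cite{TangW2024}, so your sandwich $S_{f}\subseteq\Omega_{f}\subseteq\operatorname{aff}(S_{f})$ together with the cone-reduction computations closes the argument in the same way.
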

\begin{proof}
	Let $E:=\sigma\mathcal{A}^{*}U\mathcal{A}+\sigma\mathcal{B}^{*}V\mathcal{B}$ be an arbitrary element of $\hat{\partial}^{2}\Phi(\bar{x};\bar{u},\bar{v})$ with $U\in\partial\operatorname{Prox}_{\sigma f^{*}}(\bar{u}+\sigma(\mathcal{A}\bar{x}-b))$ and $V\in\partial\operatorname{Prox}_{\sigma h^{*}}(\bar{v}+\sigma(\mathcal{B}\bar{x}-c))$.
	
	On one hand, suppose that $d$ satisfies $\langle d,Ed\rangle=0$. Since $U$ and $V$ are positively semidefinite, we have $\langle \mathcal{A}d,U\mathcal{A}d\rangle=0$ and $\langle \mathcal{B}d,V\mathcal{B}d\rangle=0$, which is also equivalent to $U(\mathcal{A}d)=0$ and $V(\mathcal{B}d)=0$. Given $y$ and $z$, let $f_{\sigma}(y):=(\sigma f^{*})^{*}(y)=\sigma f(\sigma^{-1}y)$ and $h_{\sigma}(z):=(\sigma h^{*})^{*}(z)=\sigma h(\sigma^{-1}z)$.
	By Lemma 4.2 of \cite{TangW2024}, it follows that
	\begin{align*}
	& \Gamma_{f_{\sigma}}(\sigma(\mathcal{A}\bar{x}-b),\bar{u})(\mathcal{A}d)=0,\quad \mathcal{A}d\in\operatorname{aff}\left(\Big\{d\, \Big|\, (f_{\sigma})^{\downarrow}_{-}(\sigma(\mathcal{A}\bar{x}-b),d)=\langle\bar{u},d\rangle\Big\}\right),\\
	&\Gamma_{h_{\sigma}}(\sigma(\mathcal{B}\bar{x}-c),\bar{v})(\mathcal{B}d)=0,\quad \mathcal{B}d\in\operatorname{aff}\left(\Big\{d\, \Big|\, (h_{\sigma})^{\downarrow}_{-}(\sigma(\mathcal{B}\bar{x}-b),d)=\langle\bar{v},d\rangle\Big\}\right).
	\end{align*}
	Therefore, due to Proposition 4.6 of \cite{TangW2024}, we can obtain
	\begin{align}\label{eq:prop10-1}
	\left.\begin{array}{cc}\mathcal{A}d\in\Big\{d_{1}\, \Big|\, (\sigma f^{*})^{\downarrow}_{-}(\bar{u},d_{1})=-(\sigma f^{*})^{\downarrow}_{-}(\bar{u},-d_{1})\Big\}^{\circ},\\
	\mathcal{B}d\in\Big\{d_{2}\, \Big|\, (\sigma h^{*})^{\downarrow}_{-}(\bar{v},d_{2})=-(\sigma h^{*})^{\downarrow}_{-}(\bar{v},-d_{2})\Big\}^{\circ}, \end{array}\right.
	\end{align}
	which is equivalent to
	\begin{align*}
	d\in\Big(\mathcal{A}^{*}\Big\{d_{1}\, \Big|\, f^{*\downarrow} _{-}(\bar{u},d_{1})=-f^{*\downarrow}(\bar{u},-d_{1})\Big\}\Big)^{\circ}\cap\Big(\mathcal{B}^{*}\Big\{d_{2}\, \Big|\, h^{*\downarrow} _{-}(\bar{v},d_{2})=-h^{*\downarrow}(\bar{v},-d_{2})\Big\}\Big)^{\circ}.
	\end{align*}
	By the nondegeneracy condition \eqref{eq:nondeg-cond-dual} we obtain $d=0$ and then $E$ is nonsingular.
	
	On the other hand, suppose that all the elements of $\hat{\partial}^{2}\Phi(\bar{x};\bar{\mu})$ are nonsingular and the nondegeneracy condition \eqref{eq:nondeg-cond-dual} is invalid. Then there exists $d\neq 0$ and \eqref{eq:prop10-1} holds. By Proposition 4.6 and Lemma 4.2 of \cite{TangW2024}, there exist $U\in\partial\operatorname{Prox}_{\sigma f^{*}}(\bar{u}+\sigma(\mathcal{A}\bar{x}-b))$ and $V\in\partial\operatorname{Prox}_{\sigma h^{*}}(\bar{v}+\sigma(\mathcal{B}\bar{x}-c))$ such that $U(\mathcal{A}d)=0$ and $V(\mathcal{B}d)=0$, which is a contradiction with the nonsingularity of all the elements of 	$\hat{\partial}^{2}\Phi(\bar{x};\bar{u},\bar{v})$. This completes the proof.
\end{proof}		

Now we state the SSN method in Algorithm \ref{alg-ssn}.
\begin{algorithm}
	\caption{SSN}\label{alg-ssn}
	Input $\sigma>0$, $\tilde{u}\in\mathcal{R}^{m_{1}}$, $\tilde{v}\in\mathcal{R}^{m_{2}}$
	$\nu\in(0,\frac{1}{2})$,  $\nu_{1},\nu_{2},\bar{\eta}\in(0,1)$,
	$\tau\in(0,1]$ and $\delta\in(0,1)$. Choose $x^{0}\in\mathcal{R}^{n}$. Set $j=0$ and iterate:
	\begin{description}
		\item [Step 1] Let $U^{j}\in\partial\operatorname{Prox}_{\sigma f^{*}}(\sigma(\mathcal{A}x^{j}-b)+\tilde{u})$, $V^{j}\in\partial\operatorname{Prox}_{\sigma h^{*}}(\sigma(\mathcal{B}x^{j}-c)+\tilde{v})$, and $H^{j}=\sigma \mathcal{A}^{*}U^{j}\mathcal{A}+\sigma \mathcal{B}^{*}V^{j}\mathcal{B}$. Solve the following linear system
		\begin{eqnarray*}
			(H^{j}+\epsilon_{j}I)\Delta x=-\nabla\Phi(x^{j};\tilde{u},\tilde{v})
		\end{eqnarray*}
		by a direct method or the preconditioned conjugate gradient method to obtain an approximate solution $\Delta x^{j}$ satisfying the condition below
		\begin{eqnarray*}
			\|(H^{j}+\epsilon_{j}I)\Delta x^{j}+\nabla\Phi(x^{j};\tilde{u},\tilde{v})\|\leq
			\eta_{j}:=\min(\bar{\eta},\|\nabla\Phi(x^{j};\tilde{u},\tilde{v})\|^{1+\tau}),
		\end{eqnarray*}
		where $\epsilon_{j}=\nu_{1}\min\{\nu_{2},\|\nabla\Phi(x^{j};\tilde{u},\tilde{v})\|\}$.
		\item [Step 2]  Set $\alpha_{j}=\delta^{m_{j}}$, where $m_{j}$ is the first nonnegative integer $m$ such that
		\begin{eqnarray*}
			&\Phi(x^{j}+\delta^{m}\Delta x^{j};\tilde{u},\tilde{v})\leq
			\Phi(x^{j};\tilde{u},\tilde{v})+\nu\delta^{m}\langle\nabla
			\Phi(x^{j};\tilde{u},\tilde{v}),\Delta x^{j}\rangle.&
		\end{eqnarray*}
		\item [Step 3]  Set $x^{j+1}=x^{j}+\alpha_{j}\Delta x^{j}$. If a desired stopping criterion is satisfied, terminate; otherwise set $j=j+1$ and go to Step 1.
	\end{description}
\end{algorithm}
We can mimic the proof of Theorem 3.5 in \cite{Zhaoxy2010} and obtain the convergence result for the SSN method easily. We just omit the details of the proof and list the result in the following theorem. One may see \cite{Zhaoxy2010} for more details.
\begin{theorem}
	Assume that the functions $f^{*}$ and $h^{*}$ satisfy Assumption \ref{assump-1}, Assumption \ref{assumption-2} at $\bar{u}$ for $\mathcal{A}\bar{x}-b$ and at $\bar{v}$ for $\mathcal{B}\bar{x}-c$, respectively.
	Suppose that the nondegeneracy condition \eqref{eq:nondeg-cond-dual} holds and $\operatorname{Prox}_{f^{*}}$ and $\operatorname{Prox}_{g^{*}}$ are semismooth everywhere.
	Given $\sigma>0$, $x^{0}\in\mathcal{R}^{n}$, $\tilde{u}\in\mathcal{R}^{m_{1}}$ and $\tilde{v}\in\mathcal{R}^{m_{2}}$, the sequence $\{x^{j}\}$ generated by the SSN method converges to the unique solution $\bar{x}$ of $\nabla\Phi(x;\tilde{u},\tilde{v})=0$ and it holds that
	\begin{eqnarray*}
		\|x^{j+1}-\bar{x}\|=o(\|x^{j}-\bar{x}\|).
	\end{eqnarray*}
	If in addition $\operatorname{Prox}_{f^{*}}$ and $\operatorname{Prox}_{g^{*}}$ are $p$-order semismooth everywhere, the convergence rate of the sequence $\{x^{j}\}$ is of order $1+\min\{p,\tau\}$.
\end{theorem}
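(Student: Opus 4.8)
The plan is to obtain this as a corollary of the standard convergence theory for the globalized inexact semismooth Newton method, in the same way that Theorem~3.5 of \cite{Zhaoxy2010} is proved, after recording a few structural facts about the inner problem \eqref{SSNAL-subproblem}. Write $G(x):=\nabla\Phi(x;\tilde{u},\tilde{v})=\mathcal{A}^{*}\operatorname{Prox}_{\sigma f^{*}}(\tilde{u}+\sigma(\mathcal{A}x-b))+\mathcal{B}^{*}\operatorname{Prox}_{\sigma h^{*}}(\tilde{v}+\sigma(\mathcal{B}x-c))$, so that \eqref{eq:subproblem-nonlinear-eq} reads $G(x)=0$. The preparatory facts I would establish are: (a) $\Phi(\cdot;\tilde{u},\tilde{v})$ is convex and $C^{1}$ (the Moreau envelopes $e_{\sigma f^{*}},e_{\sigma h^{*}}$ are convex and $C^{1}$), $G$ is globally Lipschitz (proximal maps are $1$-Lipschitz, $\mathcal{A},\mathcal{B}$ linear), and by the Clarke chain rule $\partial G(x)\subseteq\hat{\partial}^{2}\Phi(x;\tilde{u},\tilde{v})$, with every element of $\hat{\partial}^{2}\Phi(x;\tilde{u},\tilde{v})$ symmetric positive semidefinite; (b) $G$ is semismooth with respect to $\hat{\partial}^{2}\Phi(\cdot;\tilde{u},\tilde{v})$, and $p$-order semismooth whenever $\operatorname{Prox}_{f^{*}},\operatorname{Prox}_{h^{*}}$ are, since semismoothness is preserved under affine precomposition, linear postcomposition, and sums (cf.\ \cite{QiandSun1993}).

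The step I would flag as the crux is (c): at the solution $\bar{x}$ of $G=0$, \emph{every} element of $\hat{\partial}^{2}\Phi(\bar{x};\tilde{u},\tilde{v})$ is nonsingular. This is exactly where Theorem~\ref{theorem-nonsingularity} enters: under Assumption~\ref{assump-1} and Assumption~\ref{assumption-2}, the dual nondegeneracy condition \eqref{eq:nondeg-cond-dual} is equivalent to this nonsingularity, and since the elements are symmetric positive semidefinite, nonsingularity here means positive definiteness; upper semicontinuity of $\hat{\partial}^{2}\Phi$ then propagates a uniform positive-definiteness bound to a neighbourhood of $\bar{x}$. Combined with convexity, (c) forces $\bar{x}$ to be the unique zero of $G$ (equivalently the unique minimizer of the convex $\Phi(\cdot;\tilde{u},\tilde{v})$, whence all its level sets are bounded), and together with (b) it gives, via Clarke's inverse function theorem \cite{Clarke1976}, a two-sided local error bound $c_{1}\|x-\bar{x}\|\le\|G(x)\|\le c_{2}\|x-\bar{x}\|$ near $\bar{x}$.

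With (a)--(c) in hand the two halves of the conclusion follow from the usual arguments. For global convergence: whenever $G(x^{j})\ne0$ the regularized matrix $H^{j}+\epsilon_{j}I$ is positive definite and the tolerance $\eta_{j}\le\|G(x^{j})\|^{1+\tau}$ makes $\Delta x^{j}$ a descent direction for $\Phi$ (a short estimate gives $\langle G(x^{j}),\Delta x^{j}\rangle\le-\kappa\|G(x^{j})\|^{2}$ locally), so the Armijo search terminates finitely; since $\{\Phi(x^{j};\tilde{u},\tilde{v})\}$ is nonincreasing, bounded below, and confined to a bounded level set, every accumulation point of $\{x^{j}\}$ is a zero of $G$ and hence equals $\bar{x}$. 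For the local rate: once $x^{j}$ is close to $\bar{x}$, the bounded invertibility from (c), the estimates $\epsilon_{j}=O(\|G(x^{j})\|)=O(\|x^{j}-\bar{x}\|)$ and $\eta_{j}=O(\|x^{j}-\bar{x}\|^{1+\tau})$ from the error bound, and the semismoothness bound $\|G(x^{j})-G(\bar{x})-H^{j}(x^{j}-\bar{x})\|=o(\|x^{j}-\bar{x}\|)$ (respectively $O(\|x^{j}-\bar{x}\|^{1+p})$) combine in the standard Newton-error expansion of $x^{j}+\Delta x^{j}-\bar{x}$ to yield $\|x^{j}+\Delta x^{j}-\bar{x}\|=o(\|x^{j}-\bar{x}\|)$ (respectively $O(\|x^{j}-\bar{x}\|^{1+\min\{p,\tau\}})$); one checks, as in \cite{Zhaoxy2010}, that the unit step eventually passes the Armijo test with $\nu<\tfrac{1}{2}$, so $x^{j+1}=x^{j}+\Delta x^{j}$ for all large $j$ and the stated rates hold.

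In short, the only genuinely substantive input is (c); everything else — the descent property, finite termination of the line search, the global-to-local passage, and the inexact semismooth Newton estimate — is routine and parallels \cite{Zhaoxy2010} step for step, which is why the paper records only the statement. Since (c) is already available as Theorem~\ref{theorem-nonsingularity}, whose proof carries the weight of the perturbation analysis in Section~\ref{sec:Perturbation analysis}, the residual difficulty is mostly bookkeeping: confirming that the hypotheses of Theorem~\ref{theorem-nonsingularity} are met at the subproblem's solution (so that (c) is legitimate there) and assembling the semismoothness of $G$ cleanly from that of the two proximal maps.
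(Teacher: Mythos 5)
Your proposal is correct and is essentially the proof the paper has in mind: the paper explicitly omits the argument and defers to Theorem~3.5 of \cite{Zhaoxy2010}, with Theorem~\ref{theorem-nonsingularity} supplying the key nonsingularity of $\hat{\partial}^{2}\Phi$ at the solution, exactly as in your step (c). The remaining items (a), (b) and the descent/Armijo/local-rate bookkeeping are the routine parts of the Zhao--Sun--Toh analysis, which is all the paper intends by ``mimic the proof.''
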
	

\section{The von Neumann entropy optimization problem}
\label{sec:entropy}
In this subsection, we focus on applying the ALM to the following von Neumann entropy optimization problem
\begin{eqnarray}
\label{eq:entropy-primal-problem}
\min_{X}\left\{\inprod{C}{X}+\mu\trace(X\log X)\,|\,\mathcal{A}(X)=b,\,\mathcal{B}(X)\geq d,\,X-\varepsilon I\succeq 0\,\right\},
\end{eqnarray}
where $\mu>0$, $C\in \mathcal{S}^{n}$, $b\in\mathcal{R}^{m_1}$, $d\in\mathcal{R}^{m_2}$, $\mathcal{A}: \mathcal{S}^{n}\rightarrow\mathcal{R}^{m_1}$, $\mathcal{B}: \mathcal{S}^{n}\rightarrow\mathcal{R}^{m_2}$.

The Lagrangian function of problem \eqref{eq:entropy-primal-problem} is
\begin{eqnarray*}
	\label{eq:Lagrangian-entropy-primal-problem}
	L(X;y,z,S) &=& \inprod{C}{X} + \mu\trace(X\log X) - \inprod{y}{\mathcal{A}(X)-b} - \inprod{z}{\mathcal{B}(X)-d} - \\
	&& \inprod{S}{X-\varepsilon I}.
\end{eqnarray*}

The KKT condition of problem \eqref{eq:entropy-primal-problem} is
\begin{eqnarray*}
	\label{eq:KKT-entropy-primal-problem}
	\begin{array}{lll}
		&C-\mathcal{A}^*y-\mathcal{B}^*z-S+\mu\log X + \mu I = 0,&\\
		&\mathcal{A}(X)-b=0,\,\mathcal{B}(X)-d\geq 0,\,X-\varepsilon I\succeq 0,&\\
		&z\geq 0,\,S\succeq 0,\,\inprod{\mathcal{B}(X)-d}{z}=0,\,\inprod{X-\varepsilon I}{S}=0.&\\
	\end{array}
\end{eqnarray*}

The augmented Lagrangian function of problem \eqref{eq:entropy-primal-problem} is
\begin{eqnarray*}
	\label{eq:AL-fun-entropy-primal-problem}
	L_{\sigma}(X;y,z,S) &=& \inprod{C}{X} + \mu\trace(X\log X) + \frac{\sigma}{2}\norm{\mathcal{A}(X)-b-\frac{1}{\sigma}y}^{2} - \frac{1}{2\sigma}\norm{y}^{2} + \\ &&\frac{1}{2\sigma}\norm{\mathrm{\Pi}_{\mathcal{R}^{m_2}_{+}}(z-\sigma(\mathcal{B}(X)-d))}^{2} - \frac{1}{2\sigma}\norm{z}^{2} + \\ &&\frac{1}{2\sigma}\norm{\mathrm{\Pi}_{\mathcal{S}^{n}_{+}}(S-\sigma(X-\varepsilon I))}^{2} - \frac{1}{2\sigma}\norm{S}^{2}.
\end{eqnarray*}
Then we can calculate
\begin{eqnarray*}
	\label{eq:grad-AL-fun-entropy-primal-problem}
	\nabla L_{\sigma}(X;y,z,S) &=& C + \mu\log X +\mu I +\sigma\mathcal{A}^*(\mathcal{A}(X)-b-\frac{1}{\sigma}y) - \\
	&&\mathcal{B}^*\mathrm{\Pi}_{\mathcal{R}^{m_2}_{+}}(z-\sigma(\mathcal{B}(X)-d)) - \mathrm{\Pi}_{\mathcal{S}^{n}_{+}}(S-\sigma(X-\varepsilon I)).
\end{eqnarray*}
Further, we define
\begin{eqnarray*}
	\label{eq:Hess-AL-fun-entropy-primal-problem}
	\widehat{\partial}^{2} L_{\sigma}(X;y,z,S) &=& (\mu\log)' (X)  + \sigma\mathcal{A}^*\mathcal{A} + \sigma\mathcal{B}^*\partial\mathrm{\Pi}_{\mathcal{R}^{m_2}_{+}}(z-\sigma(\mathcal{B}(X)-d))\mathcal{B} + \\
	&&\sigma \partial\mathrm{\Pi}_{\mathcal{S}^{n}_{+}}(S-\sigma(X-\varepsilon I)).
\end{eqnarray*}

Now we present the details of the ALM for problem \eqref{eq:entropy-primal-problem}.
\begin{algorithm}
	\caption{ALM for problem \eqref{eq:entropy-primal-problem}}\label{alg-ssnal-entropy}
	Given $\sigma_{0}>0$, $\rho\geq1$ and $X^{0},S^{0}\in\mathcal{S}^{n}$, $y^{0}\in\mathcal{R}^{m_{1}}$, $z^{0}\in\mathcal{R}^{m_{2}}$. For $k=0,1,2,\ldots$, iterate the following steps.
	\begin{description}
		\item[1] Find an approximate solution
		\begin{align}\label{SSNAL-subproblem-entropy}
		X^{k+1}\approx\overline{X}^{k+1}=\mathop{\operatorname{argmin}}_{X\in\mathcal{S}^{n}}L_{\sigma_k}(X;y^{k},z^{k},S^{k}).
		\end{align}
		\item[2] Compute
		\begin{eqnarray*}
			y^{k+1}&=&y^{k}-\sigma_{k}(\mathcal{A}(X^{k+1})-b),\\
			z^{k+1}&=&\mathrm{\Pi}_{\mathcal{R}^{m_2}_{+}}(z^{k}-\sigma_{k}(\mathcal{B}(X^{k+1})-d)),\\
			S^{k+1}&=&\mathrm{\Pi}_{\mathcal{S}^{n}_{+}}(S^{k}-\sigma(X^{k+1}-\varepsilon I)).
		\end{eqnarray*}
		and update $\sigma_{k+1}=\rho\sigma_{k}$.
	\end{description}
\end{algorithm}

Since the subproblem \eqref{SSNAL-subproblem-entropy} often has no analytical solution,
the stopping criteria for the subproblem in the algorithm needs to be discussed. For $X_P := X \in \mathcal{S}^{n}$ and $X_D :=(y, z, S) \in  \mathcal{R}^{m_1} \times \mathcal{R}^{m_2}_{+}\times \mathcal{S}^{n}_{+}$, at the $k$th iteration, we define functions $f_k$ and $g_k$ as follows:
\begin{equation*}
\begin{aligned}
f_k(X_D) &:= L_{\sigma_k}(X_P; X_D^k) = \sup_{X_D} \Big\{L(X_P;  X_D) - \frac{1}{2\sigma_k}\|X_D - X_D^k\|^2 \Big\}, \\
g_k(X_P) &:= \inf_{X_P} \Big\{L(X_P;  X_D) - \frac{1}{2\sigma_k}\|X_D - X_D^k\|^2 \Big\} \\
& = \inprod{b}{y} + \inprod{z}{d} - \mu\inprod{I}{\textrm{exp}(-\frac{1}{\mu}(C-\mathcal{A}^*y-\mathcal{B}^*z-S+\mu I))} + \varepsilon\inprod{I}{S}\\
& - \frac{1}{2\sigma_k}(\|y-y^k\|^2 + \|z-z^k\|^2 + \|S-S^k\|^2).
\end{aligned}
\end{equation*}

To obtain $X_{P}^{k+1}$, based on the discussion in Subsection \ref{subsec:ALM}, we need the following criteria on the approximate computation of the subproblem \eqref{SSNAL-subproblem-entropy}:
\begin{eqnarray*}
	(A1) & f_{k} (X_P^{k+1}) - \inf f_{k}(X_P) \leq \frac{\varepsilon_k^2}{2 \sigma_k},\, \varepsilon_{k} \ge 0, \, \sum_{k=0}^{\infty} \varepsilon_k < \infty,  \label{stop1}  \\
	(B1) & f_{k} (X_P^{k+1}) - \inf f_{k}(X_P) \leq \frac{\delta_k^2}{2\sigma_k}\|X_D^{k+1} - X_D^k\|^2, 0 \leq \delta_k \leq 1,\nonumber\\
	&\sum_{k=0}^{\infty} \delta_k < \infty. \label{stop2}
\end{eqnarray*}
However, the stopping criteria (A1) and (B1) cannot be used directly since $\inf f_{k}(X_P)$ is unknown. Since
\begin{equation*}
\inf f_k (X_P) = \sup g_k(X_D) \ge g_k(X_D^{k+1}),
\end{equation*}
we have
\begin{equation*}
f_{k} (X_P^{k+1}) - \inf f_k (X_P) \leq f_{k} (X_P^{k+1}) - g_k(X_D^{k+1}).
\end{equation*}
Hence, we terminate the subproblem \eqref{SSNAL-subproblem-entropy} if $(X_P^{k+1}, X_D^{k+1})$ satisfies the following conditions:
\begin{eqnarray*}
	(A2)& f_{k} (X_P^{k+1}) - g_k(X_D^{k+1}) \leq \frac{\varepsilon_k^2}{2 \sigma_k}, \, \varepsilon_{k} \ge 0, \, \sum_{k=0}^{\infty} \varepsilon_k < \infty,  \label{stop3}   \\
	(B2)&  f_{k} (X_P^{k+1}) - g_k(X_D^{k+1}) \leq \frac{\delta_k^2}{2\sigma_k}\|X_D^{k+1} - X_D^{k}\|^2,  \, 0 \leq \delta_k \leq 1,\nonumber \\
	& \sum_{k=0}^{\infty} \delta_k < \infty. \label{stop4}
\end{eqnarray*}

As for solving the inner subproblem \eqref{SSNAL-subproblem-entropy}, we present the SSN method as below.
\begin{algorithm}
	\caption{SSN method for problem \eqref{SSNAL-subproblem}}\label{alg-ssn-entropy}
	Input $\sigma>0$, $y\in\mathcal{R}^{m_{1}}$, $z\in\mathcal{R}^{m_{2}}$
	$\nu\in(0,\frac{1}{2})$,  $\nu_{1},\nu_{2},\bar{\eta}\in(0,1)$,
	$\tau\in(0,1]$ and $\delta\in(0,1)$. Choose $X^{0},S^{0}\in\mathcal{S}^{n}$. Set $j=0$ and iterate:
	\begin{description}
		\item [Step 1] Let $W^{j}\in\partial\mathrm{\Pi}_{\mathcal{R}^{m_2}_{+}}(z-\sigma(\mathcal{B}(X^{j})-d))$, $\mathcal{V}^{j}\in\partial\mathrm{\Pi}_{\mathcal{S}^{n}_{+}}(S-\sigma(X^{j}-\varepsilon I))$, and $\mathcal{H}^{j}=(\mu\log)' X  + \sigma\mathcal{A}^*\mathcal{A} + \sigma\mathcal{B}^*W^{j}\mathcal{B} + \sigma\mathcal{V}^{j}$. Solve the following linear system
		\begin{eqnarray*}
			(\mathcal{H}^{j}+\epsilon_{j}\mathcal{I})(\Delta X)=-\nabla L_{\sigma}(X;\tilde{y},\tilde{z},\widetilde{S})
		\end{eqnarray*}
		by the preconditioned conjugate gradient (PCG) method to obtain an approximate solution $\Delta X^{j}$ satisfying the condition below
		\begin{eqnarray*}
			\|(\mathcal{H}^{j}+\epsilon_{j}\mathcal{I})(\Delta X^{j})+\nabla L_{\sigma}(X^{j};\tilde{y},\tilde{z},\widetilde{S})\|\leq
			\eta_{j}:=\min(\bar{\eta},\|\nabla L_{\sigma}(X^{j};\tilde{y},\tilde{z},\widetilde{S})\|^{1+\tau}),
		\end{eqnarray*}
		where $\epsilon_{j}=\nu_{1}\min\{\nu_{2},\|\nabla L_{\sigma}(X^{j};\tilde{y},\tilde{z},\widetilde{S})\|\}$.
		\item [Step 2]  Set $\alpha_{j}=\delta^{m_{j}}$, where $m_{j}$ is the first nonnegative integer $m$ such that
		\begin{eqnarray*}
			&L_{\sigma}(X^{j}+\delta^{m}\Delta X^{j};\tilde{y},\tilde{z},\widetilde{S})\leq
			L_{\sigma}(X^{j};\tilde{y},\tilde{z},\widetilde{S})+\nu\delta^{m}\langle\nabla
			L_{\sigma}(X^{j}+\delta^{m}\Delta X^{j};\tilde{y},\tilde{z},\widetilde{S}),\Delta X^{j}\rangle.&
		\end{eqnarray*}
		\item [Step 3.]  Set $X^{j+1}=X^{j}+\alpha_{j}\Delta X^{j}$. If a desired stopping criterion is satisfied, terminate; otherwise set $j=j+1$ and go to Step 1.
	\end{description}
\end{algorithm}

\section{Numerical experiments}
\label{sec:Numerical experiments}

In this section, we implement the ALM on von Neumann entropy optimization problems. We implement the algorithms in {\sc Matlab} R2019a. All runs are performed on a NoteBook (i710710u 4.7G with 16 GB RAM).

For Algorithm \ref{alg-ssnal-entropy}, we adopt the following stopping criterion:
\begin{eqnarray*}
	R_{\mathrm{KKT}}:=\max\{R_{\mathrm{P}},R_{\mathrm{D}},R_{\mathrm{C}}\} < \mathrm{Tol}=10^{-6},
\end{eqnarray*}
where
\begin{eqnarray*}
	R_{\mathrm{P}}&:=&\max\Big\{\frac{\norm{\mathcal{A}(X)-b}}{1+\norm{b}},\frac{\norm{\mathcal{B}(X)-d-\mathrm{\Pi}_{\mathcal{R}^{m_2}_{+}}(\mathcal{B}(X)-d)}}{1+\norm{\mathcal{B}(X)-d}},\\
	&&\frac{\norm{X-\varepsilon I-\mathrm{\Pi}_{\mathcal{S}^{n}_{+}}(X-\varepsilon I)}}{1+\norm{X-\varepsilon I}}\Big\},\\
	R_{\mathrm{D}}&:=&\frac{\norm{C-\mathcal{A}^*y-\mathcal{B}^*z-S+\mu\log X + \mu I}}{1+\norm{C}},\\
	R_{\mathrm{C}}&:=&\max\Big\{\frac{\norm{\mathcal{B}(X)-d-\mathrm{\Pi}_{\mathcal{R}^{m_2}_{+}}(\mathcal{B}(X)-d-z)}}{1+\norm{z}},\frac{\norm{X-\varepsilon I-\mathrm{\Pi}_{\mathcal{S}^{n}_{+}}(X-\varepsilon I-S)}}{1+\norm{S}}\Big\}.
\end{eqnarray*}

\subsection{PALM}
\label{subsec:PALM}
In practical computation, it is better to employ the PALM to warmstart the ALM. Therefore, we usually implement the PALM to iterate $200$ steps to generate an initial point. Now we present the details of the PALM as follows:
\begin{algorithm}
	\caption{PALM for problem \eqref{eq:entropy-primal-problem}}\label{alg-PALM-entropy}
	Given $\sigma>0$, $\tau\in(0,2)$ and $X^{0}\in\mathcal{S}^{n}$, $y^{0}\in\mathcal{R}^{m_{1}}$, $u^{0},z^{0}\in\mathcal{R}^{m_{2}}$. For $k=0,1,2,\ldots$, iterate the following steps:
	\begin{description}
		\item[1] Compute
		\begin{eqnarray*}
			\alpha_{k} = \lambda_{\max}(\sigma\mathcal{A}^*\mathcal{A} + \sigma\mathcal{B}^*\mathcal{B}).
		\end{eqnarray*}
		\item[2] Set $\mathcal{T}_{k}=\alpha_{k}\mathcal{I} - \sigma\mathcal{A}^*\mathcal{A} - \sigma\mathcal{B}^*\mathcal{B}$ and $M^{k}=\mathcal{A}^*y^{k}+\sigma\mathcal{A}^*b+\mathcal{B}^*z^{k}+\sigma\mathcal{B}^*(d+u^{k})+\mathcal{T}(X^{k})-C$. Compute
		\begin{eqnarray*}
			X^{k+1}&=&\arg\min_{Y}\Big\{\frac{1}{2}\norm{X-M^{k}}^{2}+\frac{\mu}{\sigma}\textrm{Tr}(X\log X)\,|\,X\succeq\varepsilon I\Big\},\\
			u^{k+1}&=&\mathrm{\Pi}_{\mathcal{R}^{m_2}_{+}}(\mathcal{B}(X^{k+1})-d-\frac{1}{\sigma}z^{k})).
		\end{eqnarray*}
		\item[3] Compute
		\begin{eqnarray*}
			y^{k+1}&=&y^{k}-\tau\sigma(\mathcal{A}(X^{k+1})-b),\\
			z^{k+1}&=&z^{k}-\tau\sigma(\mathcal{B}(X^{k+1})-u^{k+1}-d).
		\end{eqnarray*}
	\end{description}
\end{algorithm}

\subsection{The random data problem}
\label{subsec:random-data-problem}
In this subsection, we consider the following problem
\begin{eqnarray*}
	(P_{0}) & & \min_{X} \Big\{
	\inprod{C}{X}+\mu\trace(X\log X) \,\mid
	\,\trace(X)=1,\, X_{ij} = 0,\,i=1,\cdots,m, \;\;
	X \succeq \varepsilon I \Big \},
	\label{P0}
\end{eqnarray*}
where $\mathcal{E}$ is an index set, and it is generated randomly.
The {\sc Matlab} code to generate the data matrix $C$ is
\begin{eqnarray*}
	& & {\tt  x = 10.\textrm{\^{}} [-4:4/(n-1):0];\  C = gallery('randcorr',n*x/sum(x));} \\[5pt]
	& & {\tt  E = 2.0*rand(n,n) - ones(n,n);\  E = triu(E) + triu(E,1)';} \\[5pt]
	& & {\tt  for\ i=1:n;\ E(i,i) =1;\ end} \\[5pt]
	& & {\tt  alpha = .1;\  C = (1-alpha)*C + alpha*E;} \\[5pt]
	& & {\tt  C = (C+C')/2;\  C = min(C,ones(n,n));\ C = max(-ones(n,n),C);} \\[5pt]
\end{eqnarray*}
We compare the the numerical performances between the ALM and the PALM on the random data problems. The performances of the algorithms are
presented in Tables \ref{tablerand-1}-\ref{tablerand-2}. For each instance in the tables, we report the number of outer iterations
({\em it}), the total number of subproblems ({\em itsub}), and the
average number of PCG steps ({\em pcg}) taken to solve each linear
system (for the ALM); the number of the iterations
({\em it}) (for the PALM); the primal (\pobj) and dual (\dobj) objective
values; the relative primal infeasibility ($R_P$), the relative dual infeasibility ($R_D$), the relative complementarity condition ($R_C$), and the relative gap ($R_G$); the computing time (time) in the format of ``hours:minutes:seconds''. For simplicity, we use``$s\ \mbox{sign}(t)|t|$'' to denote a number of the form ``$s\times 10^{t}$'', e.g., 1.0-3 denotes $1.0\times 10^{-3}$. We present the results of every problem in two rows: the result for the PALM in the first row; and the result for the ALM in the second row.

In Table \ref{tablerand-1}, it is evident that the ALM successfully solves all the problems with the required accuracy within a reasonable amount of time. In contrast, the PALM fails to achieve the desired accuracy for any of the problems. Although the ALM completes the tasks, it requires a significant number of Newton iterations and PCG iterations. This is primarily due to the ill-conditioned nature of the inner subproblem, which arises from the presence of the entropy term. We also observe that in general the problem becomes increasingly ill-conditioned as $\mu$ decreases.

In Table \ref{tablerand-2}, we observe a general trend where the problem becomes better conditioned as the number of equality constraints increases. This is because, as the number of constraints grows, the term $\sigma\mathcal{A}^*\mathcal{A}$ becomes denser, which improves the nature of the inner subproblem. This indicates that our proposed algorithm is highly effective for solving problems with a large number of constraints.

\subsection{Matrix nearness problems with Bregman divergences}
\label{subsec:Bregman divergences}

In this subsection, we consider the matrix nearness problem
\begin{eqnarray*}
	(\textrm{BR}) & & \min_{X} \Big\{
	D_{\phi}(X,X_{0}) \,\mid
	\, \mathcal{A}(X) = b, \;\;
	X \succeq \varepsilon I \Big \},
	\label{Bregdiv problem}
\end{eqnarray*}
where
\begin{eqnarray*}
	D_{\phi}(X,X_{0}) &=& \phi(X)-\phi(X_{0})-\inprod{\nabla\phi(X_{0})}{X-X_{0}}
	\label{Bregdiv}
\end{eqnarray*}
is the so-called Bregman divergence. If $\phi$ takes different
functions, we can obtain different Bregman divergences. If
$\phi(X)=\frac{1}{2}\|X\|^{2}_{F}$, the associated divergence is the
squared Frobenius norm
$D_{\phi}(X,X_{0})=\frac{1}{2}\|X-X_{0}\|^{2}_{F}$; if $\phi(X)$
equals the von Neumann entropy $\trace(X\log X)$, we obtain the von
Neumann divergence
\begin{eqnarray*}
	D_{vN}(X,X_{0}) &=& \trace(X\log X-X\log X_{0}-X+X_{0}).
	\label{von_Neumann_div}
\end{eqnarray*}
Bregman divergences are well suited for metric nearness problems
because they share many geometric properties with the squared
Frobenius norm. They also exhibit an intimate relationship with
exponential families of probability distributions, which recommends
them for solving problems that arise in the statistical analysis of
data. For more details of (BR), one may refer to \cite{KulisSD}.

The nearest correlation matrix problem \cite{Higham} is a typical
case of (BR) that arises in various fields. A correlation matrix is
a positive semidefinite matrix with unit diagonal. However, in
\cite{Higham} the Euclidean distance is used which may lead to the
result that the structure of the computational optimal solution is
destroyed. For example, the rank of the optimal solution may be far
from that of $X_{0}$. But if a rank constraint $\rank(X) \leq r$, where $r>0$, is added to the problem,
then it becomes a nonconvex one.

The von Neumann divergence owns some nice properties. Among
which the rank-keeping property is a very interesting one, that is,
the rank constraint $\rank(X) \leq r$ is automatically satisfied when the rank of $X_{0}$ does not exceed
$r$, then this problem turns out to be convex. This is because this kind of divergences restrict the search for the optimal $X$ to the linear subspace of matrices that have the same range space as $X_0$.

Now we adopt the von Neumann divergence instead to find the nearest correlation matrices.
Then, the problem (BR) can be regarded as a special case of the problem ($P$), so the proposed algorithm can be applied to it.

Below is the testing example.
The matrix $X_{0}$ is an estimated $943\times943$ correlation matrix based on $100, 000$ ratings
for $1682$ movies by $943$ users. Due to the missing data, the
generated matrix $X_{0}$ is not positive semidefinite
\cite{Fushiki}, we have to project it onto the positive semidefinite
cone to adapt to the von Neumann divergence. This rating data set
can be downloaded from\\ \texttt{http://www.grouplens.org/node/73}.
Besides the unit diagonal constraints,
we optionally add sparse constraints to the problem.\\

The performances of the two algorithms on the testing example
are listed in Table \ref{table-movielens}. From the computing results we can see that the ALM can compute
the problems in a reasonable amount of time. Moreover, the rank of the computed solution $X$ is less than that of the matrix $X_0$. In contrast, the PALM fails to solve the problem. Moreover, the solution also fails to satisfy the rank requirement.

\subsection{Kernel learning problems}
\label{subsec:kernel learning }

The diffusion kernel is a general method for computing pairwise
distances among all nodes in a graph, based on the sum of weighted
paths between each pair of nodes. This technique has been used
successfully, in conjunction with kernel-based learning methods, to
draw inferences from several types of biological networks. Given $n$
nodes in a graph $x_{1},\cdots,x_{n}$,  the positive semidefinite
kernel matrix $K$ can be derived as the optimal solution of the
maximum entropy problem subject to the distance constraints
\begin{eqnarray}
& & \|x_{s_{j}}-x_{t_{j}}\|^{2}\leq \gamma,\ j=1,\cdots,m_2,
\label{distance constraint}
\end{eqnarray}
where $\{s_{j},t_{j}\}_{j=1}^{m_2}$ denote the node pairs connected by
$m_2$ edges. Once a kernel matrix is determined, the (squared)
Euclidean distance between two points can also be computed as
$\|x_{i}-x_{j}\|^{2}=K_{ii}+K_{jj}-2K_{ij}$. The problem can be
formulated as
\begin{eqnarray*}
	(\textrm{KL})& & \min_{K} \Big\{
	\trace(K\log K) \,\mid
	\, \trace(K)=1, \, \trace(KV_{j})\leq\gamma, j=1,\cdots,m_2, \;\;
	K \succeq 0 \Big \},
	\label{kernel learning}
\end{eqnarray*}
where
\begin{eqnarray*}
	[V_{j}]_{st} = & & \left\{ \begin{array}{ll}
		1,  &\mbox{if $(s=s_{j},\, t=s_{j})$ or $(s=t_{j},\, t=t_{j})$} \\[5pt]
		-1, &\mbox{if $(s=s_{j},\, t=t_{j})$ or $(s=t_{j},\, t=s_{j})$} \\[5pt]
		0,  &\mbox{otherwise.}
	\end{array} \right.
	\label{V}
\end{eqnarray*}
For the details of the kernel learning problem, one may see
\cite{TsudaN}.

Now we compute kernels from a yeast biological
network. This network was created by von Mering et al.
\cite{vMeringKSCOFB} from protein-protein interactions identified
via six different methods: high-throughput yeast two-hybrid,
correlated mRNA expression, genetic interaction (synthetic
lethality), tandem affinity purification, high-throughput
massspectrometric protein complex identification and computational
methods. All interactions were classified into one of three
confidence categories, high-, medium- and lowconfidence, based on
the number of different methods that identify an interaction as well
as the number of times the interaction is observed. In these
experiments, we use a medium confidence network containing $1000$
proteins and $1915$ edges.

The computing results can be seen in Table \ref{table-KL}. The ALM can successfully solve the problem, however, the PALM cannot supply a satisfying solution within the maximal iterations.

\section{Conclusion}
\label{sec:Conclusion}
In this paper, we focus on a class of convex composite optimization problems. We prove the equivalence between the primal/dual SSOSC and the dual/primal nondegeneracy condition, and then establish a specific set of equivalent conditions for the perturbation analysis of the problem. Furthermore, we establish the equivalence between the primal/dual SOSC and the dual/primal SRCQ, as well as the equivalence between the dual nondegeneracy condition and the nonsingularity of Clarke's generalized Jacobian of the subproblem of the ALM. These results provide a solid theoretical foundation for the ALM. Finally, we take the von Neumann entropy optimization problem as an example to demonstrate the effectiveness of the ALM.

\begin{landscape}
	\begin{center}
		\begin{footnotesize}
			\begin{longtable}{| c | c | c | ccc | cc|}
				\caption{Performances of the two algorithms on the random data problems ($P_{0}$) with different sizes}
				\label{tablerand-1}
				\\ \hline
				\mc{1}{|c|}{} & \mc{1}{|c|}{} & \mc{1}{|c|}{} & \mc{3}{c|}{}  & \mc{2}{c|}{} \\[-8pt]
				\mc{1}{|c|}{algorithm} & \mc{1}{|c|}{pbname $(n,m_1)$} &\mc{1}{|c}{$\mu$} &\mc{1}{c}{{\em it}/{\em
						itsub}/{\em pcg}} &\mc{1}{c}{\pobj} &\mc{1}{c|}{\dobj}
				&\mc{1}{c}{$R_P$/$R_D$/$R_C$/$R_G$} &\mc{1}{c|}{time}
				\\[2pt] \hline
				\endhead
				
				\hline
				\endfoot

				PALM & rand(500,121976) & 1 & 10000 & -1.9412551-13 & -2.0799913+2
				& 1.0+0 $|$ 1.3+2 $|$ 2.0-6 $|$ 9.9-1 & 00:17:03 \\[2pt]
				ALM & rand(500,121976)   & 1 & 24 $|$ 229 $|$ 3.8 & -5.2628536+0 & -5.2628527+0
				& 8.8-7 $|$ 8.7-9 $|$ 0.0+0 $|$ 7.2-8 & 00:01:47 \\[2pt]
				\hline
				
				PALM & rand(500,121976)  & 0.1 & 10000 & -8.1760343-4 & -2.0972557+2
				& 9.9-1 $|$ 2.4+2 $|$ 4.2-21 $|$ 9.9-1 & 00:17:34 \\[2pt]
				ALM & rand(500,121976)   & 0.1 & 27 $|$ 295 $|$ 29.1 & 1.9428636-1 & 1.9428850-1
				& 5.0-7 $|$ 9.7-7 $|$ 1.9-10 $|$ 8.5-7 & 00:10:58 \\[2pt]
				\hline
				
				PALM & rand(1000,499396) & 1 & 10000 & -1.3650948-13 & -3.7711096+2
				& 1.0+0 $|$ 2.0+2 $|$ 1.4-6 $|$ 9.9-1 & 03:21:22 \\[2pt]
				ALM & rand(1000,499396)   & 1 & 22 $|$ 269 $|$ 5.6 & -5.9084062+0 & -5.9084062+0
				& 7.9-7 $|$ 2.3-8 $|$ 0.0+0 $|$ 8.4-10 & 00:14:40 \\[2pt]
				\hline
				
				PALM & rand(1000,499396)  & 0.1 & 10000 & -1.6393827-3 & -3.7840614+2
				& 9.8-1 $|$ 5.0+2 $|$ 2.0-21 $|$ 9.9-1 & 01:55:01 \\[2pt]
				ALM & rand(1000,499396)   & 0.1 & 29 $|$ 329 $|$ 18.7 & 3.0305337-1 & 3.0305330-1
				& 4.6-7 $|$ 5.5-8 $|$ 2.1-8 $|$ 2.5-8 & 00:43:32 \\[2pt]
				\hline
				
				PALM & rand(1500,1124241)  & 1 & 10000 & -2.9202916-1 & -4.7626308+2
				& 9.7-1 $|$ 2.3+2 $|$ 2.8-8 $|$ 9.9-1 & 07:17:23 \\[2pt]
				ALM & rand(1500,1124241)   & 1 & 26 $|$ 310 $|$ 6.6 & -6.3132698+0 & -6.3132698+0
				& 3.6-8 $|$ 2.9-9 $|$ 0.0+0 $|$ 3.8-10 & 01:11:16 \\[2pt]
				\hline
				
				PALM & rand(1500,1124241)  & 0.1 & 10000 & -2.4575716-3 & -4.7647815+2
				& 9.7-1 $|$ 6.5+2 $|$ 2.8-8 $|$ 1.0+0 & 05:00:03 \\[2pt]
				ALM & rand(1500,1124241)   & 0.1 & 21 $|$ 284 $|$ 3.5 & 2.6813361-1 & 2.6813362-1
				& 9.3-7 $|$ 2.4-7 $|$ 0.0+0 $|$ 3.2-9 & 00:44:21 \\[2pt]
				\hline
				
				PALM & rand(2000,1996226) & 1 & 10000 & -3.9020421-1 & -7.2251358+2
				& 9.6-1 $|$ 3.3+2 $|$ 2.1-8 $|$ 1.0+0 & 21:28:27 \\[2pt]
				ALM & rand(2000,1996226)   & 1 & 26 $|$ 326 $|$ 5.0 & -6.6074188+0 & -6.6074188+0
				& 5.3-7 $|$ 3.6-9 $|$ 0.0+0 $|$ 2.8-11 & 02:06:38 \\[2pt]
				\hline
				
				PALM & rand(2000,1996226) & 0.1 & 10000 & -3.2784984-3 & -7.2538523+2
				& 9.6-1 $|$ 1.0+3 $|$ 2.5e-21 $|$ 1.0+0 & 19:25:02 \\[2pt]
				ALM & rand(2000,1996226)   & 0.1 & 39 $|$ 403 $|$ 35.9 & 2.0087568-1 & 2.0087535-1
				& 7.1-7 $|$ 2.3-7 $|$ 6.8-10 $|$ 1.4-7 & 17:29:15 \\[2pt]
				\hline
			\end{longtable}
		\end{footnotesize}
	\end{center}
\end{landscape}

\begin{landscape}
	\begin{center}
		\begin{footnotesize}
			\begin{longtable}[!ht]{| c | c | c | ccc | cc|}
				\caption{Performances of the two algorithms on the random data problems ($P_{0}$) with $n=1000$ and different $m_1$}
				\label{tablerand-2}
				\\ \hline
				\mc{1}{|c|}{} & \mc{1}{|c|}{} & \mc{1}{|c|}{} & \mc{3}{c|}{}  & \mc{2}{c|}{} \\[-8pt]
				\mc{1}{|c|}{algorithm} & \mc{1}{|c|}{pbname $(n,m_1)$} &\mc{1}{|c}{$\mu$} &\mc{1}{c}{{\em it}/{\em
						itsub}/{\em pcg}} &\mc{1}{c}{\pobj} &\mc{1}{c|}{\dobj}
				&\mc{1}{c}{$R_P$/$R_D$/$R_C$/$R_G$} &\mc{1}{c|}{time}
				\\[2pt] \hline
				\endhead
				
				\hline
				\endfoot

				PALM & rand(1000,94951) & 1 & 10000 & -2.8100387-13 & -2.5952424+2
				& 1.0+0 $|$ 9.7+1 $|$ 2.9-6 $|$ 9.6-1 & 01:32:33 \\[2pt]
				ALM & rand(1000,94951)   & 1 & 27 $|$ 349 $|$ 25.0 & -7.3551273+0 & -7.3551271+0
				& 9.7-8 $|$ 1.6-8 $|$ 7.7-9 $|$ 7.3-9 & 01:00:41 \\[2pt]
				\hline
				
				PALM & rand(1000,134416)  & 1 & 10000 & -2.4273168-13 & -2.8569898+2
				& 1.0+0 $|$ 1.1+2 $|$ 2.2-6 $|$ 9.7-1 & 01:32:42 \\[2pt]
				ALM & rand(1000,134416)   & 1 & 28 $|$ 362 $|$ 26.6 & -7.2296844+0 & -7.2296844+0
				& 4.5-8 $|$ 2.7-7 $|$ 6.1-12 $|$ 7.8-10 & 01:27:34 \\[2pt]
				\hline
				
				PALM & rand(1000,199576) & 1 & 10000 & -1.5068785-1 & -3.1231482+2
				& 8.9-1 $|$ 1.2+2 $|$ 1.6-6 $|$ 9.8-1 & 01:38:11 \\[2pt]
				ALM & rand(1000,199576)   & 1 & 28 $|$ 354 $|$ 22.7 & -7.0329644+0 & -7.0329641+0
				& 6.6-8 $|$ 5.5-9 $|$ 7.5-12 $|$ 1.6-8 & 01:22:53 \\[2pt]
				\hline
				
				PALM & rand(1000,386926)  & 1 & 10000 & -8.2704422-2 & -4.5938212+2
				& 9.9-1 $|$ 2.1+2 $|$ 9.2-7 $|$ 9.9-1 & 01:25:12 \\[2pt]
				ALM & rand(1000,386926)   & 1 & 25 $|$ 326 $|$ 21.9 & -6.4191964+0 & -6.4191933+0
				& 6.3-7 $|$ 3.1-7 $|$ 1.5-10 $|$ 1.9-7 & 00:59:13 \\[2pt]
				\hline
				
				PALM & rand(1000,461826)  & 1 & 10000 & -1.4149891-13 & -4.2611580+2
				& 1.0+0 $|$ 2.2+2 $|$ 1.3-6 $|$ 9.9-1 & 02:21:57 \\[2pt]
				ALM & rand(1000,461826)   & 1 & 27 $|$ 315 $|$ 9.7 & -6.1094205+0 & -6.1094184+0
				& 6.2-7 $|$ 1.2-8 $|$ 6.0-10 $|$ 1.4-7 & 00:23:59 \\[2pt]
				\hline
				
				PALM & rand(1000,499396)  & 1 & 10000 & -1.3650948-13 & -3.7711096+2
				& 9.8-1 $|$ 5.0+2 $|$ 2.0-21 $|$ 9.9-1 & 01:55:01 \\[2pt]
				ALM & rand(1000,499396)   & 1 & 29 $|$ 329 $|$ 18.7 & 3.0305337-1 & 3.0305330-1
				& 4.6-7 $|$ 5.5-8 $|$ 2.1-8 $|$ 2.5-8 & 00:43:32 \\[2pt]
				\hline
				
			\end{longtable}
		\end{footnotesize}
	\end{center}
\end{landscape}
\begin{landscape}
	\begin{center}
		\begin{footnotesize}
			\begin{longtable}{| c | c | cccc | cc|}
				\caption{Performances of the two algorithms on the movielens data problem (BR) with $n=943$ and $m_1=443776$.}
				\label{table-movielens}
				\\ \hline
				\mc{1}{|c|}{} & \mc{1}{|c|}{} & \mc{4}{c|}{}  & \mc{2}{c|}{} \\[-8pt]
				\mc{1}{|c|}{algorithm} & \mc{1}{|c|}{$(n,m_1)$} &\mc{1}{c}{{\em it}/{\em
						itsub}/{\em pcg}} &\mc{1}{c}{\pobj} &\mc{1}{c}{\dobj} &\mc{1}{c|}{rank$X_{0}$$|$rank$X$}
				&\mc{1}{c}{$R_P$/$R_D$/$R_C$/$R_G$} &\mc{1}{c|}{time}
				\\[2pt] \hline
				\endhead
				
				\hline
				\endfoot

				PALM & (943,443776) & 10000 & 1.2370463-1 & -9.4347874+3 & 453$|$943
				& 9.8-1 $|$ 3.8+2 $|$ 1.4-22 $|$ 9.9-1 & 03:10:21 \\[2pt]
				ALM & (943,443776)  & 62 $|$ 490 $|$ 30.5 & 1.0030991+1 & 1.0030999+1 & 453$|$20
				& 2.8-7 $|$ 7.7-7 $|$ 3.9-11 $|$ 1.1-7 & 02:37:31 \\[2pt]
				\hline
				
			\end{longtable}
		\end{footnotesize}
	\end{center}
	%
	\begin{center}
		\begin{footnotesize}
			\begin{longtable}{| c | c | ccc | cc|}
				\caption{Performances of the two algorithms on the protein-protein interactions data problem (KL) with $n=1000$, $m_1=1$ and $m_2=1915$.}
				\label{table-KL}
				\\ \hline
				\mc{1}{|c|}{} & \mc{1}{|c|}{} & \mc{3}{c|}{}  & \mc{2}{c|}{} \\[-8pt]
				\mc{1}{|c|}{algorithm} & \mc{1}{|c|}{$(n,m_1,m_2)$} &\mc{1}{c}{{\em it}/{\em
						itsub}/{\em pcg}} &\mc{1}{c}{\pobj} &\mc{1}{c|}{\dobj}
				&\mc{1}{c}{$R_P$/$R_D$/$R_C$/$R_G$} &\mc{1}{c|}{time}
				\\[2pt] \hline
				\endhead
				
				\hline
				\endfoot

				PALM & (1000,1,1915) & 10000 & -6.0224289-14 & -4.4677547+1
				& 1.0+0 $|$ 8.7+0 $|$ 1.9-3 $|$ 1.0+0 & 01:10:11 \\[2pt]
				ALM & (1000,1,1915)  & 65 $|$ 492 $|$ 28.0 & -6.3537544+0 & -6.3537539+0
				& 8.8-7 $|$ 1.0-10 $|$ 4.9-8 $|$ 4.0-8 & 02:47:16 \\[2pt]
				\hline
				
			\end{longtable}
		\end{footnotesize}
	\end{center}
\end{landscape}
\afterpage{\clearpage}

\bibliographystyle{splncs03}
\bibliography{ref_1} 

\end{document}